\tikzstyle{vertex}=[inner sep=0pt]
\theoremstyle{plain}
\newtheorem{thm}{Theorem}[section]
\newtheorem*{thm*}{Theorem}
\newtheorem{prop}[thm]{Proposition}
\newtheorem{cor}[thm]{Corollary}
\theoremstyle{definition}
\newtheorem{defn}[thm]{Definition}
\newtheorem{rmk}[thm]{Remark}
\theoremstyle{remark}
\newtheorem{example}[thm]{Example}
\numberwithin{equation}{section}
\def\on{\operatorname}
\def\sf{\mathsf}
\def\Fun{{\on{Fun}}}
\def\QConv{{\on{QConv}}}
\def\Comm{{\scr{C}\!\on{omm}}}
\def\Cat{{\sf{Cat}}}
\def\Set{{\sf{Set}}}
\def\Assoc{{\scr{A}\!\on{ssoc}}}
\def\Fin{{\sf{Fin}}}
\def\id{{\on{id}}}
\def\un{{\underline{n}}}
\def\um{{\underline{m}}}
\def\ul{{\underline{l}}}
\def\bb{\mathbb}
\def\CC{{\bb{C}}}
\def\scr{\EuScript}
\def\I{{\sf{I}}}
\def\J{{\sf{J}}}
\def\C{{\sf{C}}}
\def\D{{\sf{D}}}
\def\P{{\scr{P}}}
\def\O{{\scr{O}}}
\def\Q{{\scr{Q}}}
\def\DFib{{\sf{DFib}}}
\def\ISet{{\sf{ISet}}}
\def\un{{\underline{n}}}
\def\um{{\underline{m}}}
\definecolor{bluegray}{rgb}{0.4, 0.6, 0.8}
\definecolor{turquoise}{rgb}{0.2, 0.7, 0.6}
\title{An $\scr{O}$-monoidal Grothendieck construction}
\author{Redi Haderi and Walker H. Stern}
\begin{document}
	
	\maketitle
	
	\begin{abstract}
		Given an operad $\O$, we define a notion of weak $\O$-monoids --- which we term $\O$-pseudomonoids -- in a 2-category. In the special case with the 2-category in question is the 2-category $\Cat$ of categories, this yields a notion of $\O$-monoidal category, which in the case of the associative and commutative operads retrieves unbiased notions of monoidal and symmetric monoidal categories, respectively. We carefully unpack the definition of $\O$-monoids in the 2-categories of discrete fibrations and of category-indexed sets. Using the classical Grothendieck construction, we thereby obtain an $\O$-monoidal Grothendieck construction relating lax $\O$-monoidal functors into $\Set$ to strict $\O$-monoidal functors which are also discrete fibrations.  
	\end{abstract}
	
	\section{Introduction}
	
	Category theory provides numerous ways of characterizing and studying notions of `algebraic structure', whether through diagrammatic presentations like those of monoid and group objects in categories, or through more general theoretical frameworks like monads, Lawvere theories, or operads (see, e.g., \cite[\S 5.4]{Benabou}, \cite{LawvereTheory}, and \cite{MSS,Leinster}, respectively). Regardless of the precise structure being characterized, or the categorical framework used to study it, however, the same problem arises when trying to push one's way up the ladder of categorification: determining how best to characterize \emph{coherent} algebraic structures so that classical statements in the 1-categorical context generalize naturally to higher categories.
	
	The present paper concerns itself with just such a problem: that transporting algebraic structures coherently across the Grothendieck construction\footnote{While the underlying construction on objects which we use is the original Grothendieck construction of \cite{Grothendieck}, we formulate the equivalence globally, without a fixed base.} in a sensible way. In a sense, most of this paper involves setting the stage for a theorem which is then immediate: painstakingly defining, describing, and working with appropriately defined coherent algebras over operads until we may simply use the classical Grothendieck construction to state our desired equivalence. 
	
	Before exposing our main result in full generality or delving too deeply into the constructions which comprise this paper, let us first dwell on the 1-categorical case in which our construction becomes wholly transparent and mostly trivial. If we consider sets as discrete categories, restricting the  Grothendieck construction to functors $X\to \Set$ yields an equivalence between, on the one hand, the arrow category of sets and, on the other hand, the category of \emph{indexed sets}. The idea, ubiquitous in modern mathematics, is that a set $\{X_y\}_{y\in Y}$ of sets is equivalent to a map of sets 
	\[
	\begin{tikzcd}
		\displaystyle \coprod_{y\in Y} X_y \arrow[r] & Y 
	\end{tikzcd}
	\]
	which sends $x\in X_y$ to $y$. The categorical component of this equivalence comes into play by noting that given indexed sets $\{X_y\}_{y\in Y}$ and $\{A_b\}_{b\in B}$, a pair consisting of a map 
	\[
	\begin{tikzcd}
		f:&[-3em] B\arrow[r] & Y 
	\end{tikzcd}
	\]
	and an indexed set 
	\[
	\{\!\begin{tikzcd}
		g_b:&[-3em] A_b\arrow[r] & X_{f(b)} 
	\end{tikzcd}\!\!\}_{b\in B}
	\]
	can be uniquely identified with a commutative diagram 
	\[
	\begin{tikzcd}
		\coprod_{b\in B} A_b\arrow[r,"g"]\arrow[d]&\coprod_{y\in Y} X_y\arrow[d]\\
		B \arrow[r,"f"] & Y 
	\end{tikzcd}
	\]
	of sets. 
	
	If one then studies, e.g., associative algebras on both sides, one finds that an associative algebra in the arrow category is a pair of monoids and a homomorphism between them, and that an associative algebra in the category of indexed sets consists of a  monoid $(M,\cdot,1)$, a set $\{X_m\}_{m\in M}$ of sets indexed by $M$, and maps 
	\[
	\begin{tikzcd}
		X_{m}\times X_n \arrow[r]& X_{m\cdot n}
	\end{tikzcd}
	\]
	which satisfy some associativity and unitality conditions. If one instead studies algebras over some operad $\scr{O}$, one gets effectively the same kind of structure, but now satisfying whatever algebraic conditions are defined by $\scr{O}$.
	
	In this discrete setting, the correspondence between these two structures is transparent. However the nature of the latter ``monoid-indexed'' sets only becomes clear when returning to the viewpoint of 1-categories. One can view a monoid as a discrete strict monoidal category. From this viewpoint, the maps $X_m\times X_n\to X_{m\times n}$ form the structure maps of a lax monoidal functor from this monoidal category to the category of sets with the Cartesian product. 
	
	The purpose of this paper is to develop the necessary definitions and lemmata extend this correspondence to discrete fibrations of categories on the one hand and functors
	\[
	\begin{tikzcd}
		I\arrow[r] & \Set 
	\end{tikzcd}
	\]
	on the other. More precisely, for an operad $\O$, we aim to prove a statement of the form 
	\[
	\left\lbrace \substack{
		\text{discrete fibrations of}\\
		\text{weak } \O\text{-algebras}
	}\right\rbrace \simeq \left\lbrace \substack{
	\text{Lax $\O$-monoidal functors }\\
   I\longrightarrow \Set}
\right\rbrace
	\]
	which generalizes the discrete case above. One such generalization (in the cases of associative, braided, and symmetric monoidal categories) is already present in the literature in \cite{MoellerVasilakopoulou}, and a related equivalence can be found in \cite{Shulman}. In these cases, the equivalence presented is more general than that given here, since it works with pseudofunctors into $\Cat$ and Grothendieck fibrations, rather than functors into $\Set$ and discrete fibrations.
	
	While we believe that our results generalize to the setting of Grothendieck fibrations, we choose to work with discrete fibrations for a number of reasons. Primary among them is that our definitions and results rely on quite involved 2-categorical coherence conditions, even in the discrete case. Attempting to work with these \emph{in addition to} the coherence conditions for pseudofunctors inherent in the study of Grothendieck fibrations would substantially increase the length and complexity of the proofs in this work, and would make the arguments more opaque. The other major reason for working with discrete fibrations is that our primary intended application --- to the study of convexity --- requires only the discrete case. The details of this application are exposed in \cite{HOSconvex}. 
	
	\subsection{$\O$-pseudomonoids}
	
	We now turn to explaining the key concepts necessary to enable the generalization above. There are two categories key to the present work. The first of these is the category $\DFib$ of \emph{discrete fibrations}, whose objects are discrete fibrations of categories $\pi:\C_1\to \C_0$ and whose morphisms are  (strictly) commutative diagrams
	\[
	\begin{tikzcd}
		\C_1 \arrow[r]\arrow[d,"\pi_{\C}"'] & \D_1 \arrow[d,"\pi_{\D}"]\\
		\C_0 \arrow[r] & \D_0 
	\end{tikzcd}
	\]
	of functors. The second of these is the category $\ISet$ of \emph{indexed sets}\footnote{We here borrow terminology and notation from \cite{MoellerVasilakopoulou}, as it is particularly well suited to this subject.}, whose objects are functors 
	\[
	\begin{tikzcd}
		\C\arrow[r] & \Set 
	\end{tikzcd}
	\]
	and whose morphisms are diagrams 
	\[
	\begin{tikzcd}
		\C\arrow[dr,""'{name=U}]\arrow[dd] & \\
		& \Set \\
		\D\arrow[ur,] & \arrow[from=U,to=3-1,Rightarrow,"\mu",shorten >= 0.5em]
	\end{tikzcd}	
	\]
	where $\mu$ is a natural transformation filling the triangle. 
	
	However, neither of these are merely 1-categories, but rather 2-categories. Both have natural 2-morphisms, consisting of natural transformations between the functors comprising the 1-morphisms, subject to some compatibility conditions. The (discrete) Grothendieck construction can then be given the form of a 2- equivalence 
	\[
	\begin{tikzcd}
		\displaystyle \int: &[-3em] \ISet \arrow[r,"\simeq"] & \DFib. 
	\end{tikzcd}
	\] 
	While this `global' formulation of the Grothendieck construction is folklore, the only reference in the literature we know for it is \cite[Thm. 2.3]{MoellerVasilakopoulou}. Our main theorem aims to take an symmetric, one-colored operad $\scr{O}$ (in $\Set$), and show that this construction generalizes to the case where all the categories involved are $\O$-algebras in $\Cat$, and all the functors involved preserve the $\O$-algebra structures. 
	
	However, the 2-categorical nature of the characters of our story complicates studying algebras over operads. For instance, naturally occurring monoidal structures tend to be \emph{weakly} associative and unital, rather than strictly so. As such, to capture and generalize these examples, we need a notion of \emph{weak} $\O$-algebra in a 2-category. 
	
	Our definition of $\O$-\emph{pseudomonoid} (see Definition \ref{defn:pseudomonoid}) is one such notion appropriate for our setting. For other, related definitions, see \cite{CornerGurski} and \cite{GMMO}. Loosely speaking, it requires that the associativity of the operad action hold up to specified 2-isomorphisms, and that these 2-isomorphisms satisfy coherence conditions. We use the term \emph{pseudomonoid} rather than \emph{pseudoalgebra} because we phrase all of our definitions in terms of products, rather than in the generality of symmetric monoidal 2-categories.   
	
	In the special case of $\scr{O}$-pseudomonoids in the 2-category $\Cat$ of categories, we use the term $\O$-\emph{monoidal categories}. This term is motivated by the examples of the associative and commutative operads $\Assoc$ and $\Comm$, over which $\O$-monoidal categories are monoidal categories, and symmetric monoidal categories, respectively. However, the term $\O$-monoidal category appears elsewhere in the literature, in particular, \cite[Definition 2.1.2.13]{HA} defines a notion of \emph{$\O$-monoidal $\infty$-category}. The definition requires a fair amount of $\infty$-categorical technology, but ends up being equivalent to the notion of a coherent $\O$-monoid in $\Cat_\infty$ (see, \cite[Example 2.4.2.4]{HA}). While we will not prove that our definition represents a specialization of that of \cite{HA}, this observation justifies the use of the same terminology for both definitions. 
	
	The functoriality of this definition in morphisms of operads, together with the fact that $\Comm$ is the terminal operad shows that any symmetric monoidal category carries an $\O$-monoidal structure induced by the symmetric monoidal structure. Of particular import, this means we can consider $(\Set,\times)$ as an $\O$-monoidal category for any $\O$.
	
	Much of the paper is devoted to proving two equivalences of 2-categories, one for each side of the Grothendieck construction. Each proof is, effectively, an unwinding of definitions. The first shows that $\O$-pseudomonoids in $\DFib$ are equivalent to discrete fibrations $\pi:\C_1\to \C_0$ such that $\C_1$ and $\C_0$ are $\O$-monoidal categories, and $\pi$ is a strict $\O$-monoidal fibrations. Symbolically, we write
	\[
	\O\sf{Mon}(\DFib)\simeq \O\sf{Fib}.
	\]
	The second shows an $\O$-pseudomonoid in $\ISet$ consists of an $\O$-monoidal category $(\C,\otimes)$ and a lax $\O$-monoidal functor 
	\[
	\begin{tikzcd}
		(\C,\otimes)\arrow[r] & (\Set,\times). 
	\end{tikzcd}
	\]
	In symbols, we write 
	\[
	\O\sf{Mon}(\ISet)\simeq \ISet^{\O,\on{lax}}.
	\]
	
	\subsection{The Main theorem}
	
	With the main definitions established and unwound, the main theorem of this paper becomes relatively simple to state. 
	
	\begin{thm*}
		For any operad $\scr{O}$, the classical Grothendieck construction $\int:\ISet\to \DFib$ induces an equivalence of 2-categories 
		\[
		\begin{tikzcd}
			{\displaystyle \int^{\scr{O}}}: &[-3em] \ISet^{\scr{O},\on{lax}}\arrow[r] & \scr{O}\sf{Fib}. 
		\end{tikzcd}
		\]
	\end{thm*}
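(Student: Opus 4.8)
The plan is to assemble the main theorem from the two equivalences already established in the paper together with the classical (non-monoidal) Grothendieck equivalence. Recall that the excerpt has set up three pieces: the 2-equivalence $\int : \ISet \xrightarrow{\simeq} \DFib$, the identification $\O\sf{Mon}(\ISet) \simeq \ISet^{\scr{O},\on{lax}}$, and the identification $\O\sf{Mon}(\DFib) \simeq \scr{O}\sf{Fib}$. The strategy is therefore to show that the functor $\int^{\scr{O}}$ is obtained by applying the 2-functor $\O\sf{Mon}(-)$ to the equivalence $\int$, and then transporting along the two identifications. Concretely, I would first argue that, because $\int$ is a 2-equivalence of 2-categories that strictly preserves finite products (it sends the Cartesian product of indexed sets to the fibered product of discrete fibrations over the product of bases), it induces an equivalence on $\O$-pseudomonoid objects: for any 2-equivalence $F\colon \A \to \B$ preserving products, an $\O$-pseudomonoid in $\A$ is carried to an $\O$-pseudomonoid in $\B$, and this assignment is itself a 2-equivalence $\O\sf{Mon}(\A)\xrightarrow{\simeq}\O\sf{Mon}(\B)$.

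The key step is thus a general lemma: a product-preserving 2-equivalence induces a 2-equivalence on $\O$-pseudomonoids. I would prove this by noting that the entire data of an $\O$-pseudomonoid in Definition \ref{defn:pseudomonoid} --- the underlying object, the action 1-morphisms out of powers of the object, the associativity and unit 2-isomorphisms, and the coherence equations they satisfy --- is expressed purely in terms of finite products, composition, and 2-cells. Since $\int$ preserves all of this structure (products up to coherent equivalence, and composites and 2-cells because it is a 2-functor), it carries an $\O$-pseudomonoid to an $\O$-pseudomonoid, a morphism of such to a morphism, and a 2-cell to a 2-cell; essential surjectivity and full faithfulness on 1- and 2-morphisms are inherited from those of $\int$ because each layer of data transports along the equivalence. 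One must be slightly careful that $\int$ preserves products only up to natural isomorphism rather than on the nose, so the transported action maps pick up coherence isomorphisms; absorbing these into the structure 2-isomorphisms is where the bookkeeping lives, but it is formal.

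Having established that $\O\sf{Mon}(\int)\colon \O\sf{Mon}(\ISet)\xrightarrow{\simeq}\O\sf{Mon}(\DFib)$ is a 2-equivalence, I would conclude by pasting the commuting square of 2-equivalences
\[
\begin{tikzcd}
\O\sf{Mon}(\ISet) \arrow[r,"\O\sf{Mon}(\int)","\simeq"'] \arrow[d,"\simeq"'] & \O\sf{Mon}(\DFib) \arrow[d,"\simeq"] \\
\ISet^{\scr{O},\on{lax}} \arrow[r,"\int^{\scr{O}}"] & \scr{O}\sf{Fib}
\end{tikzcd}
\]
where the vertical 2-equivalences are the two identifications proved earlier and the bottom arrow $\int^{\scr{O}}$ is \emph{defined} to be the composite that makes the square commute (up to natural 2-isomorphism). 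Since three sides of the square are 2-equivalences, the fourth, $\int^{\scr{O}}$, is a 2-equivalence as well, and by construction it is induced by the classical Grothendieck construction $\int$. The only thing left is to check that $\int^{\scr{O}}$ agrees with the underlying action of $\int$ on objects and morphisms, which is immediate from the definitions of the two identifications.

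I expect the main obstacle to be the general lemma of the second paragraph --- specifically, verifying that the product-preservation of $\int$ is strong enough, and sufficiently coherent, that the induced map on $\O$-pseudomonoids genuinely lands in $\O$-pseudomonoids and respects all the coherence equations without introducing new obstructions. In other words, the difficulty is entirely in the careful handling of the coherence 2-isomorphisms under a functor that preserves products only up to (coherent) isomorphism; once that is dispatched, the theorem follows formally from the already-established equivalences.
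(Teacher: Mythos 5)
Your proposal matches the paper's proof: the paper proves exactly your ``general lemma'' in advance (Theorem \ref{thm:Omon_func_in_feline} and Corollary \ref{cor:2-equiv=equivOmon}, via the ``feline functor'' formalism), verifies in the proposition preceding the main theorem that $\int$ and $\on{T}$ satisfy its hypotheses, and then concludes by pasting with Theorems \ref{thm:ODFib} and \ref{thm:OISet}, just as you describe. The one place your worry is misplaced is product preservation: since $\int$ is a \emph{strict} 2-equivalence it preserves the strict 2-products on the nose, so the coherence bookkeeping you anticipate for products does not arise (the only up-to-isomorphism compatibility needed is with the embeddings of $\Set$).
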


	It is worth pointing out that we expect a number of generalizations of this theorem to hold. In particular, (1) we expect an analogous result to hold for indexed categories and Grothendieck fibrations, and (2) we expect a similar equivalence of $\infty$-categories for any $\infty$-operad $\O$, induced by the Grothendieck-Lurie construction \cite[Thm. 3.2.0.1]{HTT}.
	
	\subsection{Structure of the paper}
	
	The paper is laid out as follows. In section \ref{sec:discGroth}, we review the classical ($\Set$-valued) Grothendieck construction, and provide a proof. We then define $\O$-pseudomonoids, their morphisms and 2-morphisms, and prove some basic properties in section \ref{sec:opsmon}. Section \ref{sec:opsCat} explains the way in which $\Assoc$- and $\Comm$-monoidal categories correspond to monoidal and symmetric monoidal categories, respectively. In sections \ref{sec:opsDFib} and \ref{sec:opsISet} we unravel the definitions of $\O$-pseudomonoids in $\DFib$ and $\ISet$, and relate them to $\O$-monoidal categories. The short section \ref{sec:OGroth} then assembles the results of the previous sections to state and prove the main theorem.
	
	\subsection*{Acknowledgements}
	This work is supported by the US Air Force Office of Scientific Research under award number FA9550-21-1-0002.

	\section{The discrete Grothendieck construction}\label{sec:discGroth}
	
	We begin by reviewing the discrete Grothendieck construction --- effectively the restriction to discrete fibrations of \cite[Thm. 2.3]{MoellerVasilakopoulou} --- in detail. For another discussion of discrete fibrations and the concomitant Grothendieck construction, see \cite[\S 2]{RLfibrations}.
	
	\begin{defn}
		A \emph{discrete (coCartesian) fibration} is a functor $p:\C_1\to \C_0$ such that, for every $c\in \C_1$ and every morphism $f:p(c)\to d$ in $\C_0$, there is a \emph{unique} morphism $\widetilde{f}:c\to \widetilde{d}$ in $\C_1$ with $p(\widetilde{f})=f$. 
		
		A \emph{morphism of discrete fibrations} $f$ from $p:\C_1\to \C_0$ to $q:\D_1\to \D_0$ is a commutative diagram
		\[
		\begin{tikzcd}
			\C_1 \arrow[r,"f_1"]\arrow[d,"p"'] & \D_1\arrow[d,"q"]\\
			\C_0 \arrow[r,"f_0"'] & \D_0 
		\end{tikzcd}
		\]  
		A \emph{2-morphism} $\mu$ between morphisms $f,g:p\to q$ of discrete fibrations consists of a pair of natural transformations  $\mu_1:f_1\to g_1$ and $\mu_2:f_2\to g_2$ such that the diagram
		\[
		\begin{tikzcd}[row sep=5em,column sep=5em]
			\C_1 \arrow[r,bend left,"f_1",""'{name=U1}]\arrow[r,bend right,"g_1"',""{name=L1}]\arrow[d,"p"'] & \D_1\arrow[d,"q"]\\
			\C_0 \arrow[r,bend left,"f_0",""'{name=U0}]\arrow[r,bend right,"g_0"',""{name=L0}] & \D_0 \arrow[from=U1,to=L1, Rightarrow, "\mu_1"]\arrow[from=U0,to=L0, Rightarrow, "\mu_0"]
		\end{tikzcd}
		\]
		commutes, in the sense that there is an equality of whiskerings  $q\circ \mu_1=\mu_0\circ p$. 
		
		Discrete fibrations with their morphisms and 2-morphisms form a 2-category which we denote as $\DFib$. 
	\end{defn}

	\begin{rmk}
		Unlike fibrations in groupoids or categories, we need not define a cleavage (as in \cite[\S 2.1]{MoellerVasilakopoulou}) on a discrete fibration, as every such fibration has a unique cleavage associated to it. 
	\end{rmk}
	
	\begin{defn}
		A \emph{indexed set} is a functor $F:\I\to \Set$ from a small category $\I$ to the category of sets. A \emph{morphism of indexed sets} from $F:\I\to \Set$ to $G:\J\to \Set$ is a functor $M:\I\to \J$ together with a natural transformation $\mu:F\Rightarrow G\circ M$, i.e., such that $\mu$ fills the diagram 
		\[
		\begin{tikzcd}
			\I\arrow[dr,"F",""'{name=U}]\arrow[dd,"M"'] & \\
			 & \Set \\
			 \J\arrow[ur,"G"'] & \arrow[from=U,to=3-1,Rightarrow,"\mu",shorten >= 0.5em]
		\end{tikzcd}	
		\]
		 A \emph{2-morphism} between two such one morphisms $(M,\mu)$ and $(N,\nu)$ is a 2-morphism $\eta: M\Rightarrow N$ such that $(G\circ \eta)\star \mu=\nu$. 
		 
		Indexed sets with their morphisms and 2-morphisms form a 2-category which we denote as $\ISet$. 
	\end{defn}

	We now define the pair of pseudofunctors which constitute the Grothendieck construction. We begin with the strict 2-functor 
	\[
	\begin{tikzcd}
		{\displaystyle\int}:&[-3em] \ISet \arrow[r] & \DFib. 
	\end{tikzcd}
	\]
	Given an indexed category $F:\I\to \Set$, we define the discrete fibration $\int_{\I}F\to \I$ as follows. The objects of $\int_\I F$ are pairs $(i,x)$ consisting of $i\in \I$ and $x\in F(i)$, and a morphism $f:(i,x)\to (j,y)$ is a morphism $f:i\to j$ in $\I$ such that $F(f)(x)=y$. Given a morphism 
	\[
	\begin{tikzcd}
		\I\arrow[dr,"F",""'{name=U}]\arrow[dd,"M"'] & \\
		& \Set \\
		\J\arrow[ur,"G"'] & \arrow[from=U,to=3-1,Rightarrow,"\mu",shorten >= 0.5em]
	\end{tikzcd}	
	\]
	of indexed categories, we define $\int(M)$ to be the functor which sends $(i,x)$ to $(M(i),\mu_i(x))$ and sends $f:(i,x)\to (j,y)$ to $M(f)$. The corresponding morphism in $\DFib$  is then the commutative diagram
	\[
	\begin{tikzcd}
		{\displaystyle\int_{\I}}F\arrow[d] \arrow[r,"\int(M)"] &  {\displaystyle\int_{\J}}G\arrow[d]\\
		\I \arrow[r,"M"'] & \J
	\end{tikzcd}
	\]
	
	Given an 2-morphism $\eta:M\Rightarrow N$ from $(M,\mu)$ to $(N,\nu)$, we define a 2-morphism $\eta_1:\int(M)\Rightarrow \int(N)$ as follows. The component $(\eta_1)_{(i,x)}:(M(i),\mu_i(x))\to (N(i),\mu_i(x))$ is given by $\eta_i:M(i)\to N(i)$ in $\J$. 
	
	It is easy to check that this construction preserves both composition and identities strictly, and thus defines a 2-functor. 
	
	On the other hand, we define the 2-functor 
	\[
	\begin{tikzcd}
		\on{T}: &[-3em] \DFib \arrow[r] & \ISet
	\end{tikzcd}
	\]	
	as follows. 
	
	Given a discrete fibration $p:\C_1\to \C_0$, we define an indexed set $\on{T}(p):\C_0\to \Set$ which sends $c\mapsto p^{-1}(c)$, and for $f:c\to d$ and $x\in p^{-1}(c)$, we define $\on{T}(p)(f)(x)$ to be the unique object of $p^{-1}(d)$ such that there is a morphism $\tilde{f}:x\to \on{T}(p)(f)(x)$ over $f$ in $\C_1$. 
	
	Given a morphism of discrete fibrations 
	\[
	\begin{tikzcd}
		\C_1 \arrow[r,"f_1"]\arrow[d,"p"'] & \D_1\arrow[d,"q"]\\
		\C_0 \arrow[r,"f_0"'] & \D_0 
	\end{tikzcd}
	\]
	we define a natural transformation $\on{T}(f):\on{T}(p)\Rightarrow \on{T}(q)\circ f_0$ to have component at $c$ given by $f_1|_{p^{-1}(c)}:p^{-1}(c)\to q^{-1}(f_0(c))$.
	
	Finally, given a 2-morphism $(\mu_0,\mu_1):f\Rightarrow g$ between morphisms $f,g:p\to q$, we define $\on{T}(\mu)$ to be the natural transformation $\mu_0: f_0\Rightarrow g_0$. 
	
	It is again easily checked that this defines a strict 2-functor. 
	
	\begin{defn}
		A \emph{strict 2-equivalence} between strict 2-categories $\bb{B}$ and $\bb{C}$ is a pair of strict 2-functors $F:\bb{B}\to \bb{C}$ and $G:\bb{C}\to \bb{B}$ together with strict 2-natural isomorphisms $G\circ F\cong \on{id}$ and $F\circ G\cong \on{id}$. Equivalently, this is a equivalence of $\Cat$-enriched categories (see, for instance, Definition 6.2.10, Lemma 6.2.12, and Theorem 7.5.8 of \cite{Johnson_Yau_2021}).
	\end{defn}
	
	\begin{thm}\label{thm:classGC}
		The functors $\int$ and $\on{T}$ form a strict 2-equivalence between $\DFib$ and $\ISet$. 
	\end{thm}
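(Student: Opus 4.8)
The plan is to exhibit the two required strict 2-natural isomorphisms $\on{T}\circ\int\cong\id_{\ISet}$ and $\int\circ\on{T}\cong\id_{\DFib}$ directly, using the fact (established above) that $\int$ and $\on{T}$ are already strict 2-functors. In both cases the object-wise comparison isomorphism is forced by the definitions, so essentially all of the work lies in verifying that these comparisons are genuine 1-morphisms in the relevant 2-category and that the resulting families are \emph{strictly} 2-natural, i.e.\ that their naturality squares commute on the nose for every 1-morphism and that the corresponding 2-cell identities hold for every 2-morphism.

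First I would treat the composite $\on{T}\circ\int$. Given an indexed set $F:\I\to\Set$, the fibration $\int_{\I}F\to\I$ has fiber over $i$ the set $\{(i,x) : x\in F(i)\}$, so $\on{T}(\int_{\I}F)(i)$ is canonically in bijection with $F(i)$ via $(i,x)\mapsto x$. I would check that for $f:i\to j$ this bijection intertwines $\on{T}(\int_{\I}F)(f)$ with $F(f)$: the unique lift of $f$ starting at $(i,x)$ terminates at $(j,F(f)(x))$, so the bijections assemble into a natural isomorphism $\alpha_F:\on{T}(\int_{\I}F)\Rightarrow F$ of functors $\I\to\Set$. Regarding $\alpha_F$ as an invertible 1-morphism of $\ISet$ whose underlying base functor is $\id_{\I}$, I would then verify strict 2-naturality: for a 1-morphism $(M,\mu):F\to G$ both legs of the naturality square send $x\in F(i)$ to $\mu_i(x)$, and the 2-cell condition is immediate since $\on{T}\circ\int$ and $\id_{\ISet}$ agree on 2-morphisms.

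Next I would treat $\int\circ\on{T}$. For a discrete fibration $p:\C_1\to\C_0$, the category $\int_{\C_0}\on{T}(p)$ has objects the pairs $(c,x)$ with $c\in\C_0$ and $x\in p^{-1}(c)$; since $c=p(x)$ is determined by $x$, the assignment $(c,x)\mapsto x$ is a bijection on objects with inverse $x\mapsto(p(x),x)$. The crucial point --- where discreteness is used --- is the identification on morphisms: an arrow $(c,x)\to(d,y)$ of $\int_{\C_0}\on{T}(p)$ is a morphism $g:c\to d$ with $\on{T}(p)(g)(x)=y$, which by the defining uniqueness property corresponds to exactly one arrow $x\to y$ of $\C_1$ lying over $g$. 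This yields an isomorphism of categories $\beta_p:\int_{\C_0}\on{T}(p)\xrightarrow{\ \cong\ }\C_1$ commuting strictly with the projections to $\C_0$, hence an invertible 1-morphism of $\DFib$ with base component $\id_{\C_0}$. I would then check strict 2-naturality against 1- and 2-morphisms of $\DFib$, which reduces to unwinding that $\int(\on{T}(f))$ acts as $f_1$ under these identifications: both legs of the relevant square send $(c,x)$ to $f_1(x)$.

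The main obstacle I anticipate is bookkeeping rather than conceptual: establishing that both families are \emph{strictly}, not merely pseudo-, 2-natural, and in particular that the naturality squares commute exactly. This strictness is precisely what the uniqueness of lifts in a discrete fibration guarantees --- a weaker fibration would produce only coherent isomorphisms --- so the argument hinges on invoking that uniqueness at each stage (for functoriality of $\beta_p$, for naturality of $\alpha_F$, and for the two naturality squares). Once strict 2-naturality is verified, invertibility of each $\alpha_F$ and $\beta_p$ is immediate, and the pair $(\int,\on{T})$ then satisfies the definition of a strict 2-equivalence.
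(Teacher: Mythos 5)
Your proposal follows essentially the same route as the paper: it constructs the same object-wise comparison isomorphisms $(i,x)\mapsto x$ and $(c,x)\mapsto x$ (the paper's $\Phi_F$ and $\Psi_p$) and verifies strict 2-naturality directly, with your discussion of where uniqueness of lifts enters being a correct elaboration of details the paper leaves to the reader. No gaps; this is the paper's argument in slightly expanded form.
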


	\begin{proof}
		We define two strict natural transformations as follows. First, $\Phi:\on{T}\circ \int\Rightarrow \on{Id}_{\ISet}$. Given an indexed set $F:\I\to \Set$, the indexed set $\on{T}\left(\int_\I F\right)$ is the functor which sends $i\in \I$ to the set 
		\[
		\{(i,x)\mid x\in F(x)\}
		\]
		The component $\Phi_F$ of the natural transformation is thus the natural isomorphism with components 
		\[
		\begin{tikzcd}[row sep=0em]
			(\Phi_F)_i: &[-3em] \on{T}\left(\int_\I F\right)(i)\arrow[r] & F(i) \\
			&(i,x) \arrow[r,mapsto] & x.
		\end{tikzcd}
		\]
		It is not hard to check that this is a strict 2-natural transformation.
		
		On the other hand, we define a strict 2-natural transformation 
		\[
		\begin{tikzcd}
			\Psi: &[-3em] \int\circ \on{T} \arrow[r,Rightarrow] & \on{Id}_{\DFib}
		\end{tikzcd}
		\]
		as follows. Given a discrete fibration $p:\C_1\to \C_0$, the total space of the discrete fibration $\int_{\C_0}(\on{T}(p))\to \C_0$ has objects $(c,x)$ given by pairs $c\in \C_0$ and $x\in p^{-1}(c)$ and morphisms $f:(c,x)\to (d,y)$ given by morphisms $f:c\to d$ such that the unique lift $\widetilde{f}$ of $f$ starting from $x$ has target $y$. The component $\Psi_p$ is thus the functor which sends $(c,x)\mapsto x$, and $f:(c,x)\to (d,y)$ to $\widetilde{f}$. Once again, 2-naturality is easy to check. 
	\end{proof}
	
	\section{$\O$-pseudomonoids in 2-categories}\label{sec:opsmon}
	
	By the word \emph{operad}, we will always mean a symmetric, monochromatic operad in $\Set$. The terminal operad is thus the commutative operad $\scr{C}\!\on{omm}$. We will follow \cite{CornerGurski} in denoting the composition maps in an operad $\scr{O}$ by $\mu$'s when $\scr{O}$ is clear from context, and by $\mu^{\scr{O}}$'s otherwise. We will similarly write $\eta$ or $\eta^{\scr{O}}$ for the unit.
	
	In a 2-category (like, say, $\Cat$) we denote the horizontal composition by $\circ$ and the vertical composition by $\star$. We denote the terminal category by $\ast$. 
	
	\begin{rmk}
		Our notion of $\scr{O}$-pseudomonoid is not identical to either that of \cite{CornerGurski} or that of \cite{GMMO}. Since our aim is to study monoidal categories, the strict identity of \cite{GMMO} is the proper framework, however, the \emph{unit object axiom} \cite[Axiom 2.17]{GMMO} is ill-suited for our purpose, since it functions in the case $\scr{O}(0)=\ast$, and requires \emph{strict} monoidal unitality. As a result, we spell out our definitions below. 
		
		The other difference between our approach and that of \cite{GMMO} is that, as we aim to study \emph{lax monoidal functors}, it is convenient for us to reverse the directions of all of the natural transformations in the definitions of \cite{GMMO}, as a na\"ive laxening of their definitions would yield \emph{oplax} monoidal functors.  
	\end{rmk}
	
	Throughout, we fix a 2-category $\bb{B}$ with finite strict 2-products and with a strictly fully-faithful, product-preserving functor $\Set\to \bb{B}$, via which we will consider $\Set$ as a subcategory of $\bb{B}$.
	
	\begin{example}\
		\begin{enumerate}
			\item The 2-category $\Cat$ has strict 2-products given by the Cartesian product of categories. We consider $\Set$ as the full subcategory of $\Cat$ on the discrete categories. 
			\item The 2-category $\DFib$ has finite strict 2-products. The product of $p_j:\C_1^j\to \C_0^j$ in $\DFib$ is given by the product 
			\[
			\begin{tikzcd}
				\prod_j p_j: &[-3em] \prod_j \C_1^j \arrow[r] & \prod_j \C_0^j 
			\end{tikzcd}
			\]
			in $\Cat$. 
			
			We consider a set $X$ as the element $\id_X:X\to X$ of $\DFib$, providing the desired full subcategory. 
			\item The 2-category $\ISet$ has finite strict 2-products. The product of $\{F_j:\I_j\to \Set\}_{j=1}^n$  is the functor 
			\[
			\begin{tikzcd}
				\prod_{j} F_j :&[-3em] \prod_j \I_j \arrow[r] & \Set 
			\end{tikzcd}
			\] 
			given by composing the product of the functors in $\Cat$ with the functor $\Set^{\times n}\to \Set$ sending a tuple of sets to its Cartesian product. Note that for $F:\I\to \Set$, $X\times F$ can be canonically identified with the functor$X\times \I \to \I \to \Set$ which composes $F$ with the projection.
			
			We consider a set $X$ as the functor $X\to \Set$ from the discrete category $X$ to $\Set$ which is constant on the terminal object. This provides the desired full subcategory.
		\end{enumerate}
	\end{example}

	 We further denote by $\Fin$ the skeleton of the category of finite (possibly empty) sets with objects $\underline{n}:=\{1,\ldots,n\}$. The following definition of an operad is more in the vein of \cite[Definition 2.1.1.1]{HA}, as this formulation simplifies the abstract arguments of the paper. 
	
	\begin{defn}
		An \emph{operad} $\scr{O}$ consists of, for every $n\geq 0$, a set $\scr{O}(\underline{n})\in \Set$, which we abusively denote by $\scr{O}(n)$, and, for every map $f:\underline{n}\to \underline{m}$ in $\Fin$, a map 
		\[
		\begin{tikzcd}
			\mu_f:&[-3em] \scr{O}(m)\times {\displaystyle \prod_{i\in \underline{m}}}\left( \scr{O}(|f^{-1}(i)|)\right)\arrow[r] & \scr{O}(n)
		\end{tikzcd}
		\]
		satisfying the following two conditions. 
		\begin{enumerate}
			\item For every composable pair of maps 
			\[
			\begin{tikzcd}
				\underline{\ell}\arrow[r,"g"] & \underline{m}\arrow[r,"f"] & \underline{n} 
			\end{tikzcd}
			\]
			in $\Fin$, the diagram 
			\[
			\begin{tikzcd}
				\scr{O}(n) \times {\displaystyle\prod_{i\in \underline{n}}}\left( \scr{O}(|f^{-1}(i)|)\times\left({\displaystyle}\prod_{j\in f^{-1}(i)}\scr{O}(|g^{-1}(j)|)\right)\right) \arrow[r,"\prod_{i\in \underline{n}} \mu_{g_i}"] \arrow[d,"\mu_f"'] & \scr{O}(n)\times {\displaystyle\prod_{i\in \underline{n}}}\left( \scr{O}(|(f\circ g)^{-1}(i)|)\right)\arrow[d,"\mu_{f\circ g}"]\\
				\scr{O}(\underline{m})\times {\displaystyle\prod_{j\in \underline{m}}}\left( \scr{O}(|g^{-1}(j)|)\right) \arrow[r,"\mu_g"'] & \scr{O}({\ell})
			\end{tikzcd}
			\]
			commutes, where $g_i$ denotes the map $|(f\circ g)^{-1}(i)|\to |f^{-1}(i)|$ induced by $g$, and we have suppressed instances of the structure maps in $\Set$. 
			\item There is an element
			\[
			\begin{tikzcd}
				\eta: &[-3em] \ast \arrow[r] & \scr{O}(1)
			\end{tikzcd}
			\]
			such that the diagrams 
			\[
			\begin{tikzcd}
				 \scr{O}( {n})\times \ast \arrow[r,"\on{id}\times \eta^n"]\arrow[dr,"\on{id}"'] & \scr{O}({n})\times \prod_{i\in \underline{n}} (\scr{O}(1))\arrow[d,"\mu_{\on{id}_{\underline{n}}}"] \\
				& \scr{O}(n) 
			\end{tikzcd}
			\]
			and 
			\[
			\begin{tikzcd}
				\ast \times \scr{O}(n)\arrow[r,"\eta\times \on{id}"]\arrow[dr,"\on{id}"'] & \scr{O}(1)\times \scr{O}(n)\arrow[d,"\mu_{t_n}"] \\
				 & \scr{O}(n) 
			\end{tikzcd}
			\]
			commute, where $t_n$ denotes the unique map $\underline{n}\to \underline{1}$. 
		\end{enumerate}
	\end{defn}

	\begin{example}\ \label{example : comm, assoc}
		\begin{enumerate}
			\item It is immediate that there is a terminal operad, the \emph{commutativity operad} $\Comm$, with $\Comm(n)=\ast$. 
			\item Define the \emph{associativity operad} $\Assoc$ by setting $\Assoc(n):=\Sigma_n$ to be the group of permutations of $n$ letters, viewed as $\on{Aut}_{\Fin}(\un)$. Given $f:\um\to \un$, and a permutation $\sigma$ of $\un$, there are a unique order-preserving map $f_\sigma:\um\to \un$ and a unique permutation $\sigma_f:\um\to \um$ such that the diagram 
			\[
			\begin{tikzcd}
				\um\arrow[d,"f"'] \arrow[r,"\sigma_f"] & \um\arrow[d,"f_{\sigma}"] \\
				\un \arrow[r,"\sigma"'] & \un 
			\end{tikzcd}
			\]
			commutes and $\sigma_f$ preserves the orders on fibres induced by the order on $\underline{m}$.

			We then define, for $f:\um\to \un$, the composition map $\mu_f$ by
			\[
			\mu_f(\sigma;\tau_1,\ldots,\tau_n)=\sigma_f\circ (\tau).
			\]
			for $\tau_i$ a permutation of $f^{-1}(i)$, and $\tau=T(\{\tau_i\}_\{i\in\un\})$ the permutation of $\um$ which operates on the fibre over $i$ as $\tau_i$. Note that we can consider the collection $\tau_1,\ldots,\tau_n$ equivalently as a permutation $T(\{\tau_i\}_\{i\in\un\})$ such that the diagram 
			\[
			\begin{tikzcd}
				\um \arrow[dr,"f"']\arrow[rr,"T(\{\tau_i\}_\{i\in\un\})"] & & \um\arrow[dl,"f"] \\
				 & \um & 
			\end{tikzcd}
			\]
			commutes.

			The associativity condition then requires that, for any $\begin{tikzcd}
				\underline{\ell}\arrow[r,"g"] & \underline{m}\arrow[r,"f"] & \underline{n}
			\end{tikzcd}$, $\sigma \in \Sigma_n$, $\tau_i\in \Sigma_{f^{-1}(i)}$, and $\xi_{i,j}\in \Sigma_{g^{-1}(j)}$ for each $j\in f^{-1}(i)$ 
			\[
			[\sigma_f\circ T(\{\tau_i\}_i)]_g\circ T(\{\xi_{i,j}\}_{i,j})=\sigma_{f\circ g}\circ T\left(\left\lbrace(\tau_i)_{g_i}\circ T(\{\xi_{i,j}\}_{j\in f^{-1}(i)})\right\rbrace_{i}\right).
			\]
			It is not hard to see that 
			\[
			T(\{\alpha_i\circ \beta_i\}_i)=T(\{\alpha_i\}_i)\circ T(\{\beta_i\}_{i})
			\]
			and 
			\[
			T(\{\alpha_{i,j}\}_{i,j})=T\left(\{T(\{\alpha_{i,j}\}_j)\}_i\right)
			\]
			So the final term on both sides is $T(\{\xi_{i,j}\}_{i,j})$. It thus suffices to consider the case where the $\xi_{i,j}$ are identities Thus, we are left to show that 
			\[
			[\sigma_f\circ T(\{\tau_i\}_i)]_g=\sigma_{f\circ g}\circ T\left(\left\lbrace(\tau_i)_{g_i}\right\rbrace_{i}\right).
			\]
			We then note that 
			\[
			T(\{(\tau_i)_{g_i}\}_i)=T(\{\tau_i\}_i)_g.
			\]
			Thus, we are left to show that 
			\[
			[\sigma_f\circ \tau]_g=\sigma_{f\circ g}\circ \tau_g. 
			\]
			However, applying the uniqueness of $\sigma_f$ to the diagram 
			\[
			\begin{tikzcd}
				\ul\arrow[rr,bend left, "(\sigma_f\circ \tau)_g"]  \arrow[r,"\tau_g"]\arrow[d,"g"'] &\ul \arrow[r,"\sigma_{f\circ g}"] \arrow[d,"g_\tau"]&\ul\arrow[d,"g_{\sigma_f}"] \\
				\um \arrow[r,"\tau"'] & \um \arrow[r,"\sigma_f"] \arrow[d,"f"] & \um\arrow[d,"f_\sigma"] \\
				 & \un \arrow[r, "\sigma"'] &\un 
			\end{tikzcd}
			\]
			puts paid to this check. 
			
			It is immediate that the unitality conditions are satisfied, making $\Assoc$ an operad. 
			\item Given a semiring $R$, there is a operad $\QConv_R$ defined by setting $\QConv_R(n)$ to be the set
			\[
			\left\lbrace(\alpha_1,\ldots,\alpha_n)\in R^n\mid \sum_{i=1}^n\alpha_i=1\right\rbrace
			\]
			The composition for $f:\um\to \un$ is defined by 
			\[
			\mu_f(\vec(\alpha);\vec{\beta}^1,\ldots, \vec{\beta}^n)=\left(\alpha_{f(1)}\beta^{f(1)}_1,\ldots,\alpha_{f(1)}\beta^{f(1)}_{|f^{-1}(f(1))|},\ldots, \alpha_{f(m)}\beta^{f(m)}_{1},\ldots,\alpha_{f(m)}\beta^{f(m)}_{|f^{-1}(f(m))|}\right)
			\]
			Note that since there a no nullary operations (i.e., $\QConv_R(0)=\varnothing$), we may without loss of generality restrict the composition to surjective maps. See \cite{HOSconvex} for more details.
		\end{enumerate}
	\end{example}

	Given an isomorphism $f:\underline{n}\to \underline{m}$ in $\Set$, denote by $f^\ast: \C^{\underline{m}}\to \C^{\underline{n}}$ the induced map of strict 2-products.

	\begin{defn}\label{defn:pseudomonoid}
		Given an operad $\scr{O}$, an $\scr{O}$-pseudomonoid in $\bb{B}$ is an object $\C$ of $\bb{B}$ equipped with 1-morphisms 
		\[
		\begin{tikzcd}
			\gamma_n:&[-3em] \scr{O}(n)\times \sf{C}^{\underline{n}} \arrow[r] & \sf{C}
		\end{tikzcd}
		\]
		and, for every $f:\underline{m}\to \underline{n}$ in $\Fin$, 2-isomorphisms 
		\[
		\begin{tikzcd}[column sep=5em]
			\scr{O}(n)\times {\displaystyle\prod_{i\in\underline{n}}}\left(\scr{O}(|f^{-1}(i)|)\times \sf{C}^{ f^{-1}(i)}\right)\arrow[r,"{\on{id}\times \prod \gamma_{f^{-1}(i)}}",""'{name=U}]\arrow[d,"(\mu_f\times \on{id})\circ \tau"'] & \scr{O}(n)\times \sf{C}^{\underline{n}} \arrow[d,"\gamma_n"]\\
			\scr{O}\left(m\right)\times  \sf{C}^{\underline{m}} \arrow[r,"{\gamma_{m}}"',""{name=R}] & \sf{C} \arrow[from=U,to=R, Leftarrow,shorten >=3ex, shorten <=3ex,"\phi_{f}"] 
		\end{tikzcd}
		\]
		satisfying the following four conditions:
		\begin{enumerate}
			\item (associativity) For any maps 
			\[
			\begin{tikzcd}
				\underline{\ell}\arrow[r,"g"] & \underline{m}\arrow[r,"f"] & \underline{n} 
			\end{tikzcd}
			\]
			in $\Fin$, the two pasting diagrams 
			\[
			\begin{tikzcd}[column sep=2em,every matrix/.append style = {font = \scriptsize},every label/.append style = {font = \tiny}]
				 \scr{O}(n)\times {\displaystyle \prod_{i\in \underline{n}}}\left[\scr{O}(f^{-1}(i))\times \!\!\!\!\!{\displaystyle \prod_{j\in f^{-1}(i)}}\scr{O}(g^{-1}(j))\C^{g^{-1}(j)}\right] \arrow[rr,"\id\times\Pi_i(\id \times \Pi_j \gamma_{g^{-1}(j)})"]\arrow[dd,"(\mu_f\times \id)\circ \tau"'] & & \scr{O}(n)\times {\displaystyle \prod_{i\in \underline{n}}}\left[\scr{O}(f^{-1}(i))\times \C^{f^{-1}(j)}\right]\arrow[dd,"(\mu_f\times \id)\circ \tau"'] \arrow[dr,"\id\times \prod_i \gamma_{f^{-1}(i)}"] & \\
				 & & & \scr{O}(n)\times \C^{\underline{n}}\arrow[dd,"\gamma_n"]\\
				 \scr{O}(m) \times {\displaystyle \prod_{\substack{i\in \underline{n}\\ j\in f^{-1}(i)}}}\left[\scr{O}(g^{-1}(j))\times \C^{g^{-1}(j)}\right]\arrow[rr,"\id\times \Pi_{i,j}\gamma_{g^{-1}(j)}"]\arrow[dr,"(\mu_g\times\id)\circ \tau"'] & & \scr{O}(m)\times \C^{\underline{m}}\arrow[dr,"\gamma_m"] \arrow[ur,Rightarrow,"\phi_f"]\\ 
				 & \scr{O}(\ell)\times \C^{\underline{\ell}} \arrow[rr,"\gamma_\ell"']\arrow[ur,Rightarrow,"\phi_g"] & & \C 
			\end{tikzcd}
			\]
			and
			\[
			\begin{tikzcd}[column sep=-3em,every matrix/.append style = {font = \scriptsize},every label/.append style = {font = \tiny}]
				\scr{O}(n)\times {\displaystyle \prod_{i\in \underline{n}}}\left[\scr{O}(f^{-1}(i))\times \!\!\!\!\!{\displaystyle \prod_{j\in f^{-1}(i)}}\scr{O}(g^{-1}(j))\C^{g^{-1}(j)}\right] \arrow[rr,"\id\times\Pi_i(\id \times \Pi_j \gamma_{g^{-1}(j)})"]\arrow[dd,"(\mu_f\times \id)\circ \tau"']\arrow[dr,"\id\times((\mu_g\times \id)\circ \tau)"'] & & \scr{O}(n)\times {\displaystyle \prod_{i\in \underline{n}}}\left[\scr{O}(f^{-1}(i))\times \C^{f^{-1}(j)}\right] \arrow[dr,"\id\times \prod_i \gamma_{f^{-1}(i)}"] &[4em] \\
				&\scr{O}(n)\times {\displaystyle\prod_{i\in{\underline{n}}}}\left[ \scr{O}((fg)^{-1}(i))\times \C^{{(fg)}^{-1}(i)}\right]\arrow[dd,"(\mu_{fg}\times \id)\circ \tau"']\arrow[rr,"\id\times \Pi_i\gamma_{(fg)^{-1}(i)}"']\arrow[ur,Rightarrow,"\on{Id}\times \Pi_i\phi_{g_i}"] & & \scr{O}(n)\times \C^{\underline{n}}\arrow[dd,"\gamma_n"]\\
				\scr{O}(m) \times {\displaystyle \prod_{\substack{i\in \underline{n}\\ j\in f^{-1}(i)}}}\left[\scr{O}(g^{-1}(j))\times \C^{g^{-1}(j)}\right]\arrow[dr,"(\mu_g\times\id)\circ \tau"'] & &  \\ 
				& \scr{O}(\ell)\times \C^{\underline{\ell}} \arrow[rr,"\gamma_\ell"']\arrow[uurr,Rightarrow,"\phi_{fg}"] & & \C 
			\end{tikzcd}
			\]
			are equal. In these diagrams, we have abusively written $\tau$ for any map induced by a transformation of product diagrams which consists of a bijection onto elements not sent to the terminal object.
			\item (Identity 1) The diagram 
			\[
			\begin{tikzcd}
				\C\arrow[d,"\cong"']\arrow[ddr,"\on{id}",""'{name=U}] & \\
				\ast\times \C\arrow[d,"\eta\times \on{id}"'] & \\
				\scr{O}(1)\times \C \arrow[r,"\gamma_1"'] & \C
			\end{tikzcd}
			\]
			commutes. 
			\item (Identity 2) For any $n\geq 1$, 
			\[
			\phi_{\on{id}_{\underline{n}}}\circ \left(\on{id}\times(\eta\times \on{id})^n\right)=\on{Id}_{\gamma_n}
			\]
			where 
			\[
			\begin{tikzcd}
				(\on{id}\times(\eta\times \on{id})^n):&[-3em] \scr{O}(n)\times \sf{C}^{\times n} \arrow[r] & \scr{O}(n)\times \left(\scr{O}(1)\times \sf{C}\right)^{\times n}.
			\end{tikzcd}
			\]
			\item (Identity 3) For any $n\geq 0$, 
			\[
			\phi_{t_n}\circ (\eta\times \on{id})= \on{Id}_{\gamma_n},
			\]
			where 
			\[
			\begin{tikzcd}
				(\eta\times \on{id}):&[-3em] \scr{O}(n)\times \sf{C}^n \arrow[r] & \scr{O}(1)\times \scr{O}(n)\times \sf{C}^n 
			\end{tikzcd}
			\] 
		\end{enumerate}
		We will call an $\scr{O}$-pseudomonoid \emph{strict} if all of the $\phi_i$ are identities, recovering the notion of a monoid in the underlying 1-category. 
	\end{defn}

	\begin{rmk}
		This definition may seem odd to those familiar with the usual definition of, e.g., symmetric monoidal categories, which define a braiding entirely separately from the associators. However, as we will see when we draw explicit connections to symmetric monoidal categories, the braidings in this setting are the $\phi_f$ associated to permutations $f:\underline{n}\to \underline{n}$. 
	\end{rmk}
	
	\begin{defn}\label{defn:O-lax-morph}
		Let $(\sf{C},\gamma,\phi)$ and $(\sf{D},\lambda,\psi)$ be two $\scr{O}$-pseudomonoids in $\bb{B}$. A \emph{lax $\scr{O}$-morphism} from $\sf{C}$ to $\sf{D}$ is a 1-morphism 
		\[
		\begin{tikzcd}
			F: &[-3em] \sf{C}\arrow[r] & \sf{D}
		\end{tikzcd}
		\]
		together with 2-morphisms 
		\[
		\begin{tikzcd}
			\scr{O}(n)\times \sf{C}^{\underline{n}} \arrow[r,"\gamma_n"]\arrow[d,"\on{id}\times F"'] & \sf{C}\arrow[d,"F"] \\
			\scr{O}(n)\times \sf{D}^{\underline{n}} \arrow[r,"\lambda_n"'] & \sf{D} 
			\arrow[from=1-2, to=2-1, Leftarrow,"\xi_n"]
		\end{tikzcd}
		\] 
		for all $n\geq 0$, which satisfy the first condition in \cite[Definition 2.4]{CornerGurski}, and further satisfy that the 2-morphism 
		\[
		\begin{tikzcd}
			\sf{C} \arrow[r,"\cong"]\arrow[d,"F"'] & \ast\times \sf{C} \arrow[r,"\eta\times \on{id}"]\arrow[d,"\on{id}\times F"'] & \scr{O}(1)\times \sf{C} \arrow[r,"\gamma_1"]\arrow[d,"\on{id}\times F"'] & \sf{C}\arrow[d,"F"] \\
			\sf{D} \arrow[r,"\cong"'] & \ast\times \sf{D} \arrow[r,"\eta\times \on{id}"'] & \scr{O}(1)\times \sf{D} \arrow[r,"\lambda_1"'] & \sf{D} \arrow[from=2-3, to=1-4,Rightarrow,"\xi_1"] 
		\end{tikzcd}
		\]
		is the identity.
		
		Finally, the 1-morphism $F$ and $2$-morphisms $\xi$ must satisfy the condition that, for any $f:\underline{m}\to \underline{n}$, the front (red) and back (blue) of the pasting diagram 
		\[
		\begin{tikzcd}[column sep=2em,row sep=3em,every matrix/.append style = {font = \scriptsize},every label/.append style = {font = \tiny}]
			\scr{O}(n)\times {\displaystyle \prod_{i\in \underline{n}}}\left(\scr{O}(f^{-1}(i))\times \C^{f^{-1}(i)}\right) \arrow[rr,"\id\times\Pi_i(\id \times F^{f^{-1}(i)})"]\arrow[dd,"(\mu_f\times \id)\circ \tau"']\arrow[dr,red,"\id\times\Pi_i\gamma_{f_(i)}"] & & \scr{O}(n)\times {\displaystyle \prod_{i\in \underline{n}}}\left(\scr{O}(f^{-1}(i))\times \D^{f^{-1}(i)}\right)\arrow[dd,blue,"(\mu_f\times \id)\circ \tau"'{pos=0.6}] \arrow[dr,"\id\times \prod_i \lambda_{f_i}"] & \\
			& {\color{red} \scr{O}(n)\times \C^{\underline{n}}}\arrow[rr,red,"\id\times F^{\underline{n}}"{pos=0.7}]\arrow[dd,red,"\gamma_n"{pos=0.6}]\arrow[ur,red,Rightarrow,"\id\times \Pi_i \xi_{f^{-1}(i)}"']& & \scr{O}(n)\times \D^{\underline{n}}\arrow[dd,"\lambda_n"]\\
			\scr{O}(m) \times C^{\underline{m}}\arrow[rr,blue,"\id\times F"{pos=0.6}]\arrow[dr,"\gamma_m"']\arrow[ur,red,Rightarrow,"\phi_f"] & & {\color{blue}\scr{O}(m)\times \D^{\underline{m}}}\arrow[dr,blue,"\lambda_m"] \arrow[ur,blue,Rightarrow,"\phi_f"]\\ 
			& \C \arrow[uurr,red,Rightarrow,bend right=1em,"\xi_n"'] \arrow[rr,"F"']\arrow[ur,blue,Rightarrow,"\xi_m"] & & \D 
		\end{tikzcd}
		\] 
		are equal, i.e., that the cube commutes.
		
		We call a lax $\scr{O}$-morphism a \emph{weak $\scr{O}$-morphism} or a \emph{strict $\scr{O}$-morphism} if all of the $\xi_n$ are isomorphisms or identities, respectively. 
	\end{defn}

	\begin{rmk}\label{rmk:assoc_alg_set}
		If $\bb{B}$ is a 1-category, considered as a 2-category with discrete hom-categories, the definitions above collapse to the usual notion of algebra over an operad and morphism of algebras. Since our presentation of operads is somewhat unconventional (though it appears in, e.g., \cite[2.1.1.1]{HA}) and makes the role of the permutations rather opaque, let us unwind the meaning of an $\Assoc$-algebra in $\Set$. If $(A,\gamma)$\footnote{Since the $\phi$'s necessarily collapse to identities, we omit them here.} is an $\Assoc$-algebra in $\Set$, then we have, for every permutation $\sigma$ of $\un$, a map 
		\[
		\begin{tikzcd}
			m_\sigma:=(\gamma_n)|_{\{\sigma\}\times A}: &[-3em] A^{\un}\arrow[r] & A.
		\end{tikzcd}
		\]
		subject to the condition that the diagram defining $\phi_f$ commutes for any $f:\um\to \un$.
		
		However, supposing that $\sigma: \un\to \un$ is a permutation, and letting $(a_1,\ldots,a_n)$ be an $\underline{n}$-indexed collection of elements in $A$ (which we think of as associated to the \emph{source} of $\sigma$),the commutativity of the diagram 
		\[
		\begin{tikzcd}[column sep=5em]
			\Assoc(n)\times {\displaystyle\prod_{i\in\underline{n}}}\left(\Assoc(|\sigma^{-1}(i)|)\times A^{ \sigma^{-1}(i)}\right)\arrow[r,"{\on{id}\times \prod \gamma_{\sigma^{-1}(i)}}",""'{name=U}]\arrow[d,"(\mu_\sigma\times \on{id})\circ \tau"'] & \Assoc(n)\times A^{\underline{n}} \arrow[d,"\gamma_n"]\\
			\Assoc\left(m\right)\times  A^{\underline{m}} \arrow[r,"{\gamma_{m}}"',""{name=R}] & A 
		\end{tikzcd}
		\]
		when specialized to the sequence $(a_1,\ldots,a_n)$ and the operation $\id_{\un}\in \Assoc(n)$, yields 
		\[
		\begin{tikzcd}
			\{\id\}\times (\{\eta\}\times a_{\sigma^{-1}(i)})_{i\in\un} \arrow[r,mapsto]\arrow[d,mapsto] & \{\id\}\times (a_{\sigma^{-1}(1)},\ldots, a_{\sigma^{-1}(n)}) \\
			\{\sigma \}\times(a_1,\ldots,a_n)  
		\end{tikzcd}
		\]
		and so we see that 
		\[
		m_\sigma(a_1,\ldots,a_n)=m_{\id_{\un}}(a_{\sigma^{-1}(1)},\ldots,a_{\sigma^{-1}(n)}).
		\]
		That is, there is really only one $n$-ary operation in play. The analysis of these $n$-ary operations can then be performed using the order-preserving maps, showing, in particular, that 
		\[
		m_{\id_\un}\circ (m_{\id_{\um_1}},\ldots,m_{\id_{\um_n}})=m_{\id_{\sum_i \um_i}}.
		\]
		The unit axioms amount to requiring that $m_{\id_{\underline{1}}}=\id$, and so we see that our definition of an algebra yields the usual (unbiased) notion of an associative algebra. 
	\end{rmk}
	
	\begin{defn}
		Let $(\sf{C},\gamma,\phi)$ and $(\sf{D},\lambda,\psi)$ be two $\scr{O}$-pseudomonoids and $(F,\xi),(G,\zeta):\sf{C}\to \sf{D}$ two lax  $\scr{O}$-pseudomonoids. A \emph{$\scr{O}$-2-morphism} from $F$ to $G$ is a 2-morphism $\mu:F\Rightarrow G$ such that the pasting diagrams 
		\[
		\begin{tikzcd}
			\scr{O}(n)\times \sf{C}^n \arrow[r,"\on{id}\times F^n"]\arrow[d,"\gamma_n"'] & \scr{O}(n)\times \sf{D}^n\arrow[d,"\lambda_n"]\arrow[dl,Rightarrow,"\xi_n"'] & & \scr{O}(n)\times \sf{C}^n \arrow[r,"\on{id}\times G^n"',""{name=B}] \arrow[r,bend left=4em, "\on{id}\times F^n",""'{name=A}] \arrow[d,"\gamma_n"'] & \scr{O}(n)\times \sf{D}^n\arrow[d,"\lambda_n"]\arrow[dl,Rightarrow,"\zeta_n"']\\
			\sf{C}\arrow[r,"F",""'{name=U}]\arrow[r,bend right=4em,"G"',""{name=V}] & \sf{D} & & \sf{C}\arrow[r,"G"] & \sf{D}\arrow[from=U,to=V,Rightarrow,"\mu"]\arrow[from=A,to=B,Rightarrow,"\id\times \mu^n"']
		\end{tikzcd}
		\]
		are equal. 
	\end{defn}
	
	\begin{defn}
		There is a 2-category of $\scr{O}$-pseudomonoids, lax $\scr{O}$-morphisms, and $\scr{O}$-2-morphisms in $\bb{B}$, which we denote $\scr{O}\sf{Mon}(\bb{B})$. 
	\end{defn}

	\begin{prop}\label{prop:op_map_psmon_functor}
		If $f:\scr{O}\to \scr{P}$ is a morphism of operads, then for any $2$-category $\bb{B}$ restriction along $f$ induces a 2-functor 
		\[
		\begin{tikzcd}
			f^\ast:&[-3em] \scr{P}\sf{Mon}(\bb{B})\arrow[r] & \scr{O}\sf{Mon}(\bb{B}).
		\end{tikzcd}
		\]
	\end{prop}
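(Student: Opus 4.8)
The plan is to define $f^\ast$ by whiskering every piece of pseudomonoid data along the components $f_n\colon\O(n)\to\P(n)$ of the operad morphism, and then to observe that each axiom transports because $f$ is compatible with the operad composition maps and unit. Recall that such an $f$ consists of maps $f_n$ with $f_1\circ\eta^{\O}=\eta^{\P}$ and, for every $g\colon\um\to\un$ in $\Fin$,
\[
f_m\circ\mu^{\O}_g=\mu^{\P}_g\circ\Bigl(f_n\times\textstyle\prod_{i\in\un}f_{|g^{-1}(i)|}\Bigr).
\]
On objects, given a $\P$-pseudomonoid $(\C,\gamma,\phi)$, I would set $f^\ast\C=(\C,\gamma^{\O},\phi^{\O})$ with $\gamma^{\O}_n:=\gamma_n\circ(f_n\times\id_{\C^{\un}})$, and $\phi^{\O}_g$ obtained by whiskering $\phi_g$ along the map $f_n\times\prod_{i\in\un}(f_{|g^{-1}(i)|}\times\id)$ on the source corner of the square defining $\phi_g$. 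Since whiskering a $2$-isomorphism gives a $2$-isomorphism, each $\phi^{\O}_g$ is invertible. The content is that this whiskered cell has exactly the boundary required in Definition \ref{defn:pseudomonoid}: its ``top-then-right'' leg becomes $\gamma^{\O}_n\circ(\id\times\prod_i\gamma^{\O})$ because the outer $f_n$ and the inner $f_{|g^{-1}(i)|}$ are absorbed into $\gamma^{\O}_n$ and the $\gamma^{\O}$'s, while its ``left-then-bottom'' leg becomes $\gamma^{\O}_m\circ((\mu^{\O}_g\times\id)\circ\tau)$ precisely because the displayed compatibility rewrites $\mu^{\P}_g\circ(f_n\times\prod f)$ as $f_m\circ\mu^{\O}_g$, with the trailing $f_m$ absorbed into $\gamma^{\O}_m$.

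With $f^\ast\C$ so defined, I would verify the four axioms of Definition \ref{defn:pseudomonoid} by whiskering the corresponding axioms of $(\C,\gamma,\phi)$. Each axiom is an equality of pasting diagrams (or a normalization of some $\phi$) assembled from the $\gamma$'s, $\phi$'s, $\mu$'s, $\eta$, and shuffles $\tau$; precomposing such an equality on its source corner with the appropriate product of $f_n$'s converts it, term by term, into the corresponding $\O$-axiom, using the compatibility equation to rewrite every $\mu^{\P}$-leg and $f_1\circ\eta^{\O}=\eta^{\P}$ to rewrite every unit leg. As whiskering preserves equality of $2$-cells, the two associativity pastings then agree and the three identity normalizations hold for $f^\ast\C$.

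On a lax $\P$-morphism $(F,\xi)\colon\C\to\D$ I would set $f^\ast(F,\xi)=(F,\xi^{\O})$ with $\xi^{\O}_n$ the whiskering of $\xi_n$ by $f_n\times\id$; the three conditions of Definition \ref{defn:O-lax-morph} (the condition of \cite{CornerGurski}, the unit condition, and the cube) transport by the same whiskering argument. On an $\O$-$2$-morphism $\mu\colon F\Rightarrow G$ I would leave $\mu$ unchanged and note that the two pasting diagrams defining an $\O$-$2$-morphism are the $f$-whiskerings of those defining the $\P$-$2$-morphism, hence equal. Finally, because $f^\ast$ acts by precomposition with the fixed maps $f_n$ and leaves every underlying $1$- and $2$-morphism of $\bb{B}$ untouched, and because composition in $\O\sf{Mon}(\bb{B})$ and $\P\sf{Mon}(\bb{B})$ is computed by the same pastings in $\bb{B}$, the assignment strictly preserves identities and both compositions; thus $f^\ast$ is a strict $2$-functor.

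The main obstacle is bookkeeping rather than conceptual: making the term-by-term rewriting in the associativity axiom rigorous requires tracking the nested fibre products $\prod_{i\in\un}\prod_{j\in g^{-1}(i)}$ and confirming that the whiskering map commutes past every shuffle $\tau$ and every instance of $\mu^{\P}_g$. One must apply the compatibility of $f$ with composition at each of the two distinct $\mu$-legs appearing in the two pastings and check that the same whiskering map factors identically through both. Once the notation is arranged so that $f$ visibly slides through $\tau$ (which only permutes and duplicates factors, none of which lie in the image of $f$), this is routine, so I expect no genuine difficulty beyond careful indexing.
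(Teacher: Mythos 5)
Your proposal is correct and follows essentially the same route as the paper: the paper likewise defines $(f^\ast\gamma)_n=\gamma_n\circ(f_n\times\id)$ and obtains $(f^\ast\phi)_g$ and $(f^\ast\xi)_n$ as pasting diagrams that whisker the $\P$-structure cells along the appropriate products of the $f_n$'s, with the operad-morphism compatibility and $f_1\circ\eta^{\O}=\eta^{\P}$ making the boundaries match, associativity verified by stacking a commuting outer shell onto the $\P$-axiom cube, and $2$-morphisms passed through unchanged. The paper's only additional explicit ingredient is an appeal to the interchange law for strict functoriality on $1$-morphisms, which your precomposition argument covers in substance.
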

	
	\begin{proof}
		Given a $\scr{P}$-pseudomonoid $(\C,\gamma,\phi)$ in $\bb{B}$, we define $f^\ast(\C,\gamma,\phi)$ to have the same underlying object $\C$, structure maps $(f^\ast \gamma)_n$ given by the composites 
		\[
		\begin{tikzcd}
			\scr{O}(n)\times\C^{\underline{n}} \arrow[r,"f_n\times \id"] & \scr{P}(n)\times \C^{\underline{n}} \arrow[r,"\gamma_n"] & \C.
		\end{tikzcd}
		\]
		The structure morphisms $(f^\ast \phi)_{g}$ are then given by the pasting diagrams 
		\[
		\begin{tikzcd}[column sep=5em]
			\scr{O}(n)\times {\displaystyle\prod_{i\in\underline{n}}}\left(\scr{O}(|g^{-1}(i)|)\times \sf{C}^{ g^{-1}(i)}\right)\arrow[dr,"f_n\times (\Pi_if_{g^{-1}(i)}\times \id)"']\arrow[dd,"(\mu_f\times \id)\circ \tau"']\arrow[rr,"\id \times \Pi_i(f^\ast \gamma)_{g^{-1}(i)}"] & & \scr{O}(n)\times \C^{\underline{n}}\arrow[d,"f_n\times \id"] \\
			& \scr{P}(n)\times {\displaystyle\prod_{i\in\underline{n}}}\left(\scr{P}(|g^{-1}(i)|)\times \sf{C}^{ g^{-1}(i)}\right)\arrow[r,"{\on{id}\times \Pi_i \gamma_{g^{-1}(i)}}",""'{name=U}]\arrow[d,"\mu_g\times \on{id}"'] & \scr{P}(n)\times \sf{C}^{\underline{n}} \arrow[d,"\gamma_n"]\\
			\scr{O}\left(m\right)\times  \sf{C}^{\underline{m}}\arrow[r,"f_m\times \id"'] & \scr{P}\left(m\right)\times  \sf{C}^{\underline{m}} \arrow[r,"{\gamma_{m}}"',""{name=R}] & \sf{C} \arrow[from=U,to=R, Leftarrow,shorten >=3ex, shorten <=3ex,"\phi_{f}"] 
		\end{tikzcd}
		\]
		It is an exercise in 3-dimensional diagram drawing to see that associativity holds for $f^\ast(\C,\gamma,\phi)$. Indeed, for each non-commutative face of the cube representing associativity, one gets either a copy of the diagram defining $(f^\ast\phi)_g$, or a product of such diagrams. This creates a cube with two ``shells'', the inner of which commutes because of the associativity for the original $\scr{P}$-pseudomonoid, and the outer commutes by construction. The unitality conditions are immediate from the fact that $f_1$ preserves the unit.
		
		Now let $(\sf{C},\gamma,\phi)$ and $(\sf{D},\lambda,\psi)$ be two $\scr{P}$-pseudomonoids in $\bb{B}$, and let $(F,\xi)$ be a lax $\scr{P}$-morphism from $\C$ to $\D$. We define a lax $\scr{O}$-morphism from $(\sf{C},f^\ast\gamma,f^\ast\phi)$ to $(\sf{D},f^\ast\lambda,f^\ast\psi)$ to have the same underlying 1-morphism $F$, and structure 2-morphisms $(f^\ast \xi)_n$ given by the pasting diagrams 
		\[
		\begin{tikzcd}
			\scr{O}(n)\times \sf{C}^{\underline{n}}\arrow[r,"f_n\times \id"]\arrow[d,"\id\times F"]& \scr{P}(n)\times \sf{C}^{\underline{n}} \arrow[r,"\gamma_n"]\arrow[d,"\on{id}\times F"'] & \sf{C}\arrow[d,"F"] \\
			\scr{O}(n)\times \sf{D}^{\underline{n}}\arrow[r,"f_n\times \id"']&\scr{P}(n)\times \sf{D}^{\underline{n}} \arrow[r,"\lambda_n"'] & \sf{D} 
			\arrow[from=1-3, to=2-2, Leftarrow,"\xi_n"]
		\end{tikzcd}
		\] 
		Unitality is again immediate from the fact that $f_1$ preserves the unit, and so we are left to check that associativity holds. This amounts to the commutativity of the outer cube in (\ref{diag:assoc_for_opfunct}). Note that the back cube commutes by construction, the bottom front cube commutes because $(F,\xi)$ is a lax morphism of $\scr{P}$-pseudomonoids, and the top front cube commutes by the construction of $f^\ast(\xi)$ and the universal property of the product.
		
		It is immediate that this assignment is strictly functorial by applying the interchange law to the pasting diagram
		\[
		\begin{tikzcd}
			\scr{O}(n)\times \sf{C}^{\underline{n}}\arrow[r,"f_n\times \id"]\arrow[d,"\id\times F"]& \scr{P}(n)\times \sf{C}^{\underline{n}} \arrow[r,"\gamma_n"]\arrow[d,"\on{id}\times F"'] & \sf{C}\arrow[d,"F"] \\
			\scr{O}(n)\times \sf{D}^{\underline{n}}\arrow[r,"f_n\times \id"']\arrow[d,"\id\times G"']&\scr{P}(n)\times \sf{D}^{\underline{n}} \arrow[r,"\lambda_n"'] \arrow[d,"\id\times G^{\underline{n}}"']& \sf{D} 
			\arrow[from=1-3, to=2-2, Leftarrow,"\xi_n"]\arrow[d,"G"]\\
			\scr{O}(n)\times \sf{E}^{\underline{n}}\arrow[r,"f_n\times \id"']&\scr{P}(n)\times \sf{E}^{\underline{n}} \arrow[r,"\\mu_n"'] & \sf{E} 
			\arrow[from=2-3, to=3-2, Leftarrow,"\zeta_n"]\\
		\end{tikzcd}
		\]
		and similarly, unitality follows immediately. 
		
		Finally, given an $\scr{P}$-2-morphism $\mu:F\Rightarrow G$ between $(F,\xi),(G,\zeta):\sf{C}\to \sf{D}$, we will show that $\mu$ itself defines a $\scr{O}$-2-morphism from $(F,f^\ast\xi)$ to $(G,f^\ast \zeta)$. We wish to show the equality of the diagrams 
		\[
		\begin{tikzcd}
			\scr{O}(n)\times \C^{\underline{n}}\arrow[d,"f_n\times \id"'] \arrow[r,"\id\times F^{\underline{n}}"] & \scr{O}(n)\times \D^{\underline{n}}\arrow[d,"f_n\times \id"] \\
			\scr{O}(n)\times \C^{\underline{n}}\arrow[d,"\gamma_n"'] \arrow[r,"\id\times F^{\underline{n}}"] & \scr{O}(n)\times \D^{\underline{n}}\arrow[d,"\lambda_n"]\\
			\C\arrow[r,"F",""'{name=U}]\arrow[r,bend right=2cm,"G"',""{name=V}] & \D\arrow[from=U,to=V, Rightarrow,"\mu"]\arrow[from=2-2,to=3-1,Rightarrow,"\xi_n"']
		\end{tikzcd} \quad \text{and}\quad 
		\begin{tikzcd}
			\scr{O}(n)\times \C^{\underline{n}}\arrow[d,"f_n\times \id"'] \arrow[r,"\id\times G^{\underline{n}}"',""{name=V}]\arrow[r,bend left=2cm,"\id\times F^n",""'{name=U}] & \scr{O}(n)\times \D^{\underline{n}}\arrow[d,"f_n\times \id"] \\
			\scr{O}(n)\times \C^{\underline{n}}\arrow[d,"\gamma_n"'] \arrow[r,"\id\times G^{\underline{n}}"] & \scr{O}(n)\times \D^{\underline{n}}\arrow[d,"\lambda_n"]\\
			\C\arrow[r,"G"] & \D\arrow[from=2-2,to=3-1,Rightarrow,"\zeta_n"']\arrow[from=U,to=V,Rightarrow,"\id\times\mu^n"]
		\end{tikzcd}
		\]
		Since $\mu$ is a $\scr{P}$-2-morphism, we need only note that the pasting diagrams 
		\[
		\begin{tikzcd}
			\scr{O}(n)\times \C^{\underline{n}}\arrow[d,"f_n\times \id"'] \arrow[r,"\id\times G^{\underline{n}}"',""{name=V}]\arrow[r,bend left=2cm,"\id\times F^n",""'{name=U}] & \scr{O}(n)\times \D^{\underline{n}}\arrow[d,"f_n\times \id"] \\
			\scr{O}(n)\times \C^{\underline{n}} \arrow[r,"\id\times G^{\underline{n}}"] & \scr{O}(n)\times \D^{\underline{n}} \arrow[from=U,to=V,Rightarrow,"\id\times \mu^n"]
		\end{tikzcd} \quad \text{and}\quad 
		\begin{tikzcd}
			\scr{O}(n)\times \C^{\underline{n}}\arrow[d,"f_n\times \id"'] \arrow[r,"\id\times F^{\underline{n}}"'] & \scr{O}(n)\times \D^{\underline{n}}\arrow[d,"f_n\times \id"] \\
			\scr{O}(n)\times \C^{\underline{n}} \arrow[r,"\id\times F^{\underline{n}}",""'{name=U}]\arrow[r,bend right=2cm,"\id\times G^n"',""{name=V}] & \scr{O}(n)\times \D^{\underline{n}} \arrow[from=U,to=V,Rightarrow,"\id\times \mu^n"]
		\end{tikzcd}
		\]
		are equal by the universal property of the product. 
	\end{proof}

	\begin{defn}
		Let $\bb{C}$ and $\bb{B}$ be 2-categories with finite strict 2-products and equipped with fully faithful functors 
		\[
		\begin{tikzcd}
			\iota_{\bb{C}}: &[-3em] \Set \arrow[r] & \bb{C}
		\end{tikzcd}
		\]
		and 
		\[
		\begin{tikzcd}
			\iota_{\bb{B}}: &[-3em] \Set \arrow[r] & \bb{B}.
		\end{tikzcd}
		\]
		We then call $(\CC,\iota_{\bb{C}})$ and $(\bb{B},\iota_{\bb{B}})$ \emph{catlike}. 
		 
		We say a functor $F:\CC\to \bb{B}$ between catlike 2-categories is \emph{feline} if it preserves finite strict 2-products, and the diagram 
		\[
		\begin{tikzcd}
			 & \Set \arrow[dr,"\iota_{\bb{B}}"]\arrow[dl,"\iota_{\bb{C}}"']& \\
			\CC \arrow[rr,"F"'] & &\bb{B} 
		\end{tikzcd}
		\]
		commutes up to (necessarily strict) natural isomorphism $\mu$. We will call $(F,\mu)$  a \emph{feline functor}.  
	\end{defn}

	\begin{rmk}
		The notion of a catlike category is not a particularly meaningful notion, but rather an ad hoc axiomatization of the properties of $\Cat$ which are important in our setting. 
	\end{rmk}

	\begin{thm}\label{thm:Omon_func_in_feline}
		Let $\bb{C}$ and $\bb{B}$ be catlike 2-categories.
		\begin{enumerate}
			\item If $(Q,\alpha):\bb{B}\to \bb{C}$ is a feline functor then $Q$ induces a functor 
			\[
			\begin{tikzcd}
				\overline{Q}:&[-3em]\scr{O}\sf{Mon}(\bb{B})\arrow[r] & \scr{O}\sf{Mon}(\bb{C}). 
			\end{tikzcd}
			\]
			\item If $(Q,\alpha),(R,\beta):\bb{B}\to \bb{C}$ are two feline functors and $\mu: Q\Rightarrow R$ is a strict natural isomorphism between them such that $(\mu\circ \iota_{\bb{B}})\star \alpha=\beta$, then $\mu$ induces a  strict natural isomorphism $\overline{\mu}:\overline{Q}\Rightarrow\overline{R}$. 
		\end{enumerate}
	\end{thm}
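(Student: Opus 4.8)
The plan is to follow the template set by the proof of Proposition \ref{prop:op_map_psmon_functor}, with restriction along a morphism of operads replaced by transport along the felinity data: the isomorphism $\alpha$ together with the comparison isomorphisms witnessing that $Q$ preserves finite strict $2$-products. For part (1), I first define the $2$-functor $\overline{Q}$ on objects. Given an $\scr{O}$-pseudomonoid $(\C,\gamma,\phi)$ in $\bb{B}$, the underlying object of $\overline{Q}(\C,\gamma,\phi)$ is $Q(\C)$. Writing $q$ for the product-comparison isomorphism $Q(\scr{O}(n))\times Q(\C)^{\un}\cong Q(\scr{O}(n)\times \C^{\un})$ and $\alpha$ for the isomorphism $\scr{O}(n)\cong Q(\scr{O}(n))$ supplied by felinity, I set the structure map $(\overline{Q}\gamma)_n$ to be the composite
\[
\scr{O}(n)\times Q(\C)^{\un} \xrightarrow{\alpha^{-1}\times \id} Q(\scr{O}(n))\times Q(\C)^{\un}\xrightarrow{\ q\ } Q(\scr{O}(n)\times \C^{\un})\xrightarrow{Q(\gamma_n)} Q(\C).
\]
The structure $2$-isomorphisms $(\overline{Q}\phi)_f$ are obtained by whiskering $Q(\phi_f)$ --- a $2$-isomorphism because $Q$ is a strict $2$-functor --- with the appropriate instances of $q$ and $\alpha$, exactly as the $(f^\ast\phi)_g$ of Proposition \ref{prop:op_map_psmon_functor} were built by whiskering $\phi_g$ with instances of $f_n$.

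The bulk of the work, and the main obstacle, is checking the associativity axiom of Definition \ref{defn:pseudomonoid} for $(\overline{Q}\gamma,\overline{Q}\phi)$; as in Proposition \ref{prop:op_map_psmon_functor} this is a three-dimensional diagram chase. The key point is that $Q$, being a strict $2$-functor, carries the equality of pasting diagrams expressing associativity for $(\gamma,\phi)$ to an equality of the corresponding $Q$-image pasting diagrams; one then conjugates through $q$ and $\alpha$, whose naturality and compatibility with the associativity and symmetry of the strict products allow every inserted comparison isomorphism to cancel in pairs. Concretely, one again produces a cube with two ``shells'': the inner shell commutes by associativity for $(\C,\gamma,\phi)$ after applying $Q$, and the outer shell commutes by the construction of $(\overline{Q}\phi)_f$ together with product-preservation. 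The three unit axioms reduce to the facts that $Q$ preserves the unit $\ast$ and that $\alpha$ identifies $Q(\eta)$ with $\eta$; these are immediate.

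The extension of $\overline{Q}$ to lax $\scr{O}$-morphisms and to $\scr{O}$-$2$-morphisms proceeds identically: a lax morphism $(F,\xi)$ is sent to $(Q(F),\overline{Q}\xi)$, where $\overline{Q}\xi$ is $Q(\xi)$ whiskered with the comparison isomorphisms, and an $\scr{O}$-$2$-morphism $\nu$ is sent to $Q(\nu)$. The lax-morphism axioms of Definition \ref{defn:O-lax-morph} and the $\scr{O}$-$2$-morphism axiom follow by applying $Q$ and conjugating, precisely as in Proposition \ref{prop:op_map_psmon_functor}, and strict functoriality of $\overline{Q}$ then follows from strict functoriality of $Q$ together with the interchange law.

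For part (2), I define the component of $\overline{\mu}$ at $(\C,\gamma,\phi)$ to be $\mu_{\C}:Q(\C)\to R(\C)$, regarded as a morphism $\overline{Q}(\C)\to \overline{R}(\C)$; the task is to show it underlies a \emph{strict} $\scr{O}$-morphism, i.e.\ commutes strictly with $(\overline{Q}\gamma)_n$ and $(\overline{R}\gamma)_n$. This is exactly where the hypothesis $(\mu\circ\iota_{\bb{B}})\star\alpha=\beta$ is used: strict $2$-naturality of $\mu$ gives $\mu_{\C}\circ Q(\gamma_n)=R(\gamma_n)\circ \mu_{\scr{O}(n)\times \C^{\un}}$, while that same naturality identifies $\mu_{\scr{O}(n)\times\C^{\un}}$ with the product of $\mu$'s through the comparison isomorphisms $q$, and the compatibility condition identifies the $\alpha$-part with the $\beta$-part. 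Hence the $\xi$-data for $\mu_{\C}$ are all identities. That $\{\mu_{\C}\}$ assembles into a strict $2$-natural isomorphism $\overline{Q}\Rightarrow \overline{R}$ --- strict naturality against lax $\scr{O}$-morphisms and $\scr{O}$-$2$-morphisms --- is then inherited directly from the strict $2$-naturality of $\mu:Q\Rightarrow R$, since the underlying $1$- and $2$-morphisms of $\overline{Q}$ and $\overline{R}$ are just $Q$ and $R$ applied to the underlying data.
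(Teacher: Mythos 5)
Your proposal is correct and follows essentially the same route as the paper's proof: the structure maps are the composite through $\alpha$ and the product-comparison isomorphism followed by $Q(\gamma_n)$, the coherences are verified by adding two extra ``shells'' (one for the product comparison, one for $\alpha$) around the $Q$-image of the original pasting diagrams, and part (2) is handled componentwise via strict $2$-naturality of $\mu$ together with the compatibility condition $(\mu\circ\iota_{\bb{B}})\star\alpha=\beta$. The only cosmetic difference is the direction in which you orient the felinity isomorphism ($\alpha^{-1}$ versus $\alpha$), which is immaterial since it is invertible.
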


	\begin{proof}
		We give constructions of  $\overline{Q}$ applied to objects, morphisms, and 2-morphism. Given an $\scr{O}$-pseudomonoid $(\C,\gamma,\phi)$ in $\bb{B}$, We define structure maps to be the composites 
		\[
		\begin{tikzcd}
			\O(n)\times Q(\C)^{\underline{n}} \arrow[r,"\alpha","\cong"'] & Q(\O(n))\times Q(\C)^{\underline{n}}\arrow[r,"\cong"] & Q(\O(n)\times \C^{\underline{n}}) \arrow[r,"Q(\gamma_n)"] & Q(\C).
		\end{tikzcd}
		\]
		where the middle isomorphism is that induced by the universal property of the product. The structure 2-morphisms are given by the pasting diagrams 
		\begin{equation}\label{diag:func_Omon}
		\begin{tikzcd}[every matrix/.append style = {font = \scriptsize},every label/.append style = {font = \tiny},column sep=3em]
			\scr{O}(n)\times {\displaystyle\prod_{i\in\underline{n}}}\left(\scr{O}(|f^{-1}(i)|)\times Q( \sf{C}^{ f^{-1}(i)})\right) \arrow[ddd]\arrow[rrr] \arrow[dr,"\alpha","\cong"']&[-8em] &[-8em] & \O(n)\times Q(\C)^{\underline{n}}\arrow[d,"\alpha","\cong"']\\
			& Q(\scr{O}(n))\times {\displaystyle\prod_{i\in\underline{n}}}\left(Q(\scr{O}(|f^{-1}(i)|))\times Q( \sf{C}^{ f^{-1}(i)})\right) \arrow[dr,"\cong"]\arrow[rr]\arrow[dd]& & Q(\O(n))\times Q(\C)^{\underline{n}})\arrow[d,"\cong"]\\
			& & Q\left(\scr{O}(n)\times {\displaystyle\prod_{i\in\underline{n}}}\left(\scr{O}(|f^{-1}(i)|)\times \sf{C}^{ f^{-1}(i)}\right)\right)\arrow[r,"Q({\on{id}\times \prod \gamma_{f^{-1}(i)}})",""'{name=U}]\arrow[d,"Q(\mu_f\times \on{id})"'] & Q(\scr{O}(n)\times \sf{C}^{\underline{n}}) \arrow[d,"Q(\gamma_n)"]\\
			 \O(m)\times \Q(\C)^{\underline{m}}\arrow[r,"\alpha","\cong"']& Q(\O(m))\times \Q(\C)^{\underline{m}}\arrow[r,"\cong"']&Q(\scr{O}\left(m\right)\times  \sf{C}^{\underline{m}}) \arrow[r,"Q({\gamma_{m}})"',""{name=R}] & Q(\sf{C}) \arrow[from=U,to=R, Leftarrow,shorten >=3ex, shorten <=3ex,"Q(\phi_{f})"] 
		\end{tikzcd}
		\end{equation}
		Where the middle isomorphisms are again the universal maps associated to products. Each of the coherence conditions follows by simply stacking two additional `shells' on the corresponding diagram, one for the universal property of the product, and one for $\alpha$. 
		
		Given a lax $\O$-morphism $(F,\xi)$ from $(\sf{C},\gamma,\phi)$ to $(\sf{D},\lambda,\psi)$ in $\bb{B}$, the corresponding lax $\O$-morphism from $\overline{Q}(\sf{C},\gamma,\phi)$ to $\overline{Q}(\sf{D},\lambda,\psi)$ has underlying functor given by $Q(F)$. The structure 2-morphisms are given by the pasting diagrams 
		\[
		\begin{tikzcd}
			\O(n)\times Q(\C)^n\arrow[d,"\id\times Q(F)"'] \arrow[r,"\alpha\times\id"] & Q(\O(n))\times Q(\C)^n \arrow[r,"\cong"]\arrow[d] & Q(\O(n)\times \C^n) \arrow[r,"Q(\gamma_n)"]\arrow[d] & Q(\C)\arrow[d,"Q(F)"]\\
			\O(n)\times Q(\D)^n \arrow[r,"\alpha\times\id"] & Q(\O(n))\times Q(\D)^n \arrow[r,"\cong"] & Q(\O(n)\times \D^n) \arrow[r,"Q(\lambda)"]\arrow[ur,Rightarrow,"Q(\xi_n)"] & Q(\D)
		\end{tikzcd}
		\]
		It is immediate from the interchange law that this assignment is functorial and unital. The coherence laws follow as the did on objects. 
		
		Finally, let $(\sf{C},\gamma,\phi)$ and $(\sf{D},\lambda,\psi)$ be two $\scr{O}$-pseudomonoids and $(F,\xi),(G,\zeta):\sf{C}\to \sf{D}$ two lax  $\scr{O}$-morphisms in $\bb{B}$, and let $\mu:F\Rightarrow G$  be an $\O$-2-morphism. Then $Q(\mu)$ is an $\O$-2-morphism between $\overline{Q}(F,\xi)$ and $\overline{Q}(G,\zeta)$, by the same style of argument described above. Unitality and functoriality follow from the selfsame properties of $Q$, completing the proof of (1). 
		
		To prove (2), let $\mu$ be a strict natural isomorphism as in the statement of the theorem. We first note that the component $\mu_{(\C,\gamma,\phi)}:=\mu_\C$ defines a strict morphism of $\O$-monoids. Applying the appropriate products of $\mu_\C$ objectwise  to diagram (\ref{diag:func_Omon}),  strict 2-naturality shows that the cube formed from the bottom right square commutes, the universal property of the product shows that the middle shell commutes, and the compatibility with $\alpha$ and $\beta$ ensures that the outer shell commutes. The compatibilities with 1-and 2-morphisms are similarly immediate. 
	\end{proof}

	\begin{cor}\label{cor:2-equiv=equivOmon}
		If $\bb{B}$ and $\bb{C}$ are catlike 2-categories which are  strictly 2-equivalent via feline functors and compatible natural transformations, there is a strict 2-equivalence 
		\[
		\scr{O}\sf{Mon}(\bb{B})\simeq \scr{O}\sf{Mon}(\bb{C}). 
		\]
	\end{cor}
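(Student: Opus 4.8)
The plan is to deduce the corollary directly from Theorem \ref{thm:Omon_func_in_feline} by establishing that the assignment $Q\mapsto\overline{Q}$ is strictly functorial in feline functors. Unwinding the hypothesis, a strict $2$-equivalence via feline functors and compatible natural transformations furnishes feline functors $(F,\alpha_F)\colon\bb{B}\to\bb{C}$ and $(G,\alpha_G)\colon\bb{C}\to\bb{B}$ together with strict $2$-natural isomorphisms $\mu\colon G\circ F\cong\on{id}_{\bb{B}}$ and $\nu\colon F\circ G\cong\on{id}_{\bb{C}}$, where \emph{compatible} is taken to mean precisely that $\mu$ and $\nu$ satisfy the compatibility hypothesis $(\mu\circ\iota_{\bb{B}})\star\alpha=\beta$ of part (2) of the theorem.

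First I record the two easy closure properties. The composite $G\circ F$ is feline: it preserves finite strict $2$-products because $F$ and $G$ do, and its structure isomorphism $\alpha_{GF}$ is the whiskered composite $\alpha_G\star(G\circ\alpha_F)$ witnessing $G\circ F\circ\iota_{\bb{B}}\cong G\circ\iota_{\bb{C}}\cong\iota_{\bb{B}}$; likewise $(\on{id}_{\bb{B}},\on{id})$ is feline. By part (1) these induce $\overline{F}$, $\overline{G}$, $\overline{G\circ F}$, and $\overline{\on{id}_{\bb{B}}}$, and inspecting the construction shows at once that $\overline{\on{id}_{\bb{B}}}=\on{id}_{\scr{O}\sf{Mon}(\bb{B})}$, since the structure maps and structure $2$-morphisms produced from the identity feline functor are literally $\gamma_n$ and $\phi_f$.

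The heart of the argument is the identity $\overline{G}\circ\overline{F}=\overline{G\circ F}$ (and symmetrically on the other side), which I would verify on objects, $1$-morphisms, and $2$-morphisms by comparing the defining pasting diagrams. On an object $(\sf{C},\gamma,\phi)$, the structure map of $\overline{G}\,\overline{F}(\sf{C})$ is $G$ applied to the structure map of $\overline{F}(\sf{C})$ precomposed with the product-comparison maps for $G$, whereas the structure map of $\overline{G\circ F}(\sf{C})$ uses $\alpha_{GF}$ and the comparison maps for $G\circ F$ directly. These agree precisely by the standard coherence of product-comparison isomorphisms for a composite of product-preserving functors, together with the definition $\alpha_{GF}=\alpha_G\star(G\circ\alpha_F)$; the same bookkeeping identifies the structure $2$-morphisms, as the ``two extra shells'' of diagram (\ref{diag:func_Omon}) stack associatively. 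The agreement on lax morphisms and on $\scr{O}$-$2$-morphisms is the identical computation applied to $Q(\xi_n)$ and $Q(\mu)$.

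With functoriality in hand, the conclusion is immediate. Applying part (2) of Theorem \ref{thm:Omon_func_in_feline} to the feline functors $G\circ F$ and $\on{id}_{\bb{B}}$ and the compatible strict $2$-natural isomorphism $\mu$ yields a strict $2$-natural isomorphism $\overline{\mu}\colon\overline{G\circ F}\cong\overline{\on{id}_{\bb{B}}}$, which by the two identities above \emph{is} a strict $2$-natural isomorphism $\overline{G}\circ\overline{F}\cong\on{id}_{\scr{O}\sf{Mon}(\bb{B})}$; symmetrically $\overline{\nu}$ gives $\overline{F}\circ\overline{G}\cong\on{id}_{\scr{O}\sf{Mon}(\bb{C})}$. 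Hence $(\overline{F},\overline{G},\overline{\mu},\overline{\nu})$ is a strict $2$-equivalence $\scr{O}\sf{Mon}(\bb{B})\simeq\scr{O}\sf{Mon}(\bb{C})$. I expect the main obstacle to be exactly the coherence bookkeeping in the functoriality step, namely checking that the product-comparison isomorphisms and the feline structure isomorphisms compose on the nose as claimed; everything else is a direct appeal to the theorem.
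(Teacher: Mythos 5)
Your proposal is correct and follows essentially the same route as the paper: establish $\overline{(\id,\id)}=\id$ and $\overline{G\circ F}=\overline{G}\circ\overline{F}$ by comparing the defining composites and pasting diagrams (the paper does this via an explicit commutative diagram rather than citing coherence of product-comparison maps, but the content is identical), then apply part (2) of Theorem \ref{thm:Omon_func_in_feline} to transport the strict 2-natural isomorphisms. The only addition you make beyond the paper's proof is the explicit (and harmless) check that composites and identities of feline functors are feline.
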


	\begin{proof}
		Since, clearly, $\overline{(\id,\id)}=\id$, it will be sufficient for us to show that, for composable feline functors $(Q,\alpha)$ and $(R,\beta)$, $\overline{(R\circ Q,(R\circ \alpha)\star \beta)}$ is equal to $\overline{(R,\beta)}\circ \overline{(Q,\alpha)}$. 
		
		Applying both functors to $(\C,\gamma,\phi)$, the commutative diagram 
		\[
		\begin{tikzcd}
			R(\O(n)\times Q(\C)^{\underline{n}}) \arrow[r, "R(\alpha\times \id)"] & R(Q(\O(n))\times Q(\C)^{\underline{n}}) \arrow[r,"\cong"] & R(Q(\O(n)\times \C^{\underline{n}})) \arrow[r,"R(Q(\gamma_n))"] & R(Q(\C)) \\
			R(\O(n))\times R(Q(\C))^{\underline{n}}\arrow[u,"\cong"]\arrow[r,"R(\alpha)\times \id"] & R(Q(\O(n)))\times R(Q(\C))^{\underline{n}} \arrow[ur,"\cong"'] \arrow[u,"\cong"] & &\\
			\O(n)\times R(Q(\C))^{\underline{n}}\arrow[u,"\beta\times\id"] \arrow[ur,"(R(\alpha)\circ \beta)\times \id"'] & & & 
		\end{tikzcd}
		\]
		we see that the structure map of $\overline{R\circ Q}(\C,\gamma,\phi)$ --- the bottom composite --- is equal to the structure map of $\overline{R}\circ \overline{Q}(\C,\gamma,\phi)$ --- the top left composite. Applying the same basic construction to the diagram (\ref{diag:func_Omon}) then shows that, in fact, the two functors agree identically on objects. 
		
		Applying both functors to a lax $\O$-morphism $(F,\xi)$ shows that both images are given by the pair $R(Q(F))$ together with the 2-morphism 
		\[
		\begin{tikzcd}
			\O(n)\times R(Q(\C))^n\arrow[d,"\id\times R(Q(F))"'] \arrow[r]  & R(Q(\O(n)\times \C^n)) \arrow[r,"Q(\gamma_n)"]\arrow[d] & R(Q(\C))\arrow[d,"R(Q(F))"]\\
			\O(n)\times R(Q(\D))^n \arrow[r] & R(Q(\O(n)\times \D^n)) \arrow[r,"R(Q(\lambda))"]\arrow[ur,Rightarrow,"R(Q(\xi_n))"] & R(Q(\D))
		\end{tikzcd}
		\]  
		Where the two unlabeled morphisms are given by either the upper left composite in the previous diagram (for $\overline{R}\circ \overline{Q}$) or the lower composite (for $\overline{R\circ Q}$). Since we have already concluded these are equal, this shows that the functors agree on 1-morphisms. It is immediate from construction that they agree on 2-morphisms, and so the proof is complete. 
	\end{proof}

	\section{$\scr{O}$-pseudomonoids in $\Cat$}\label{sec:opsCat}
	
		\begin{defn}
		We call an $\scr{O}$-pseudomonoid in $\Cat$ a \emph{$\scr{O}$-monoidal category}, and denote the category $\scr{O}\sf{Mon}(\Cat)$ simply by $\scr{O}(\Cat)$. We will call the 2-morphisms in this category \emph{lax $\scr{O}$-monoidal functors} and \emph{$\scr{O}$-monoidal transformations} respectively.
	\end{defn}
	
	Since speaking of \emph{monoidal} structures invokes a certain mode of presentation of such structures, let us briefly unpack what it means for a category $\C$ to have a monoidal structure indexed by an operad $\O$. There are two pieces of data:
	\begin{itemize}
		\item For each $n$-ary operation $p \in \O(n)$ a tensoring operation indexed by $p$, which we write as
		\[
		\begin{tikzcd}
			\bigotimes_p : &[-3em] \C^\un \arrow[r] & \C
		\end{tikzcd}
		\]
		Hence, for an ordered $n$-tuple of objects $(A_i)_{i \in \un}$ we have a $p$-indexed tensor product $\bigotimes_p (A_i)_{i \in \un}$.
		
		\item Given a function $f : \um \to \un$ in $\Fin$, an operation $p \in \O(n)$ and operations $q_i \in \O(|f^{-1}(i)|)$, $i \in \un$, we have a structure natural isomorphism 
		\[
		\begin{tikzcd}
			\phi_f :&[-3em] \displaystyle \bigotimes_{p \circ (q_i)_{i \in \un}} (A_j)_{j \in \um} \arrow[r] & \displaystyle \bigotimes_{p} \left( \bigotimes_{q_i} (A_j)_{f(j) = i} \right)_{i \in \un}
		\end{tikzcd}
		\]
		for all $m$-tuples of objects $(A_j)_{j \in \um}$ in $\C$. The notation $\phi_f$ for the above isomorphism is an abuse which leaves as understood in context the fact that $\phi_f$ depends on the $A_i$'s.
	\end{itemize}
	The above data are subject to the following coherence laws:
	\begin{itemize}
		\item Given a composable pair of functions $\ul \xrightarrow{g} \um \xrightarrow{f} \un$ in $\Fin$, an operation $p \in \O(n)$, a sequence of operations $q_i \in \O(|f^{-1}(i)|)$, $i \in \un$, and a sequence of operations $r_j \in \O(|g^{-1}(j)|)$, $j \in \um$, the following square commutes
		\begin{equation}\label{diag:moncat_assoc}
		\begin{tikzcd}[column sep=1em]
			{\displaystyle \bigotimes_{p \circ (q_i)_{i \in \un} \circ (r_j)_{j \in \um}} (A_k)_{k \in \ul}} &&& {\displaystyle \bigotimes_{p} \left( \bigotimes_{q_i \circ (r_j)_{f(j)= i}} (A_k)_{fg(k)=i} \right)_{i \in \un}} \\
			\\
			{\displaystyle \bigotimes_{p \circ (q_i)_{i \in \un}} \left( \bigotimes_{r_j} (A_k)_{g(k) = j} \right)_{j \in \um}} &&& {\displaystyle \bigotimes_p \left( \bigotimes_{q_i} \left( \bigotimes_{r_j} (A_k)_{g(k) = j} \right)_{f(j) = i} \right)_{i \in \un}}
			\arrow["{\phi_g}"', from=1-1, to=3-1]
			\arrow["{\phi_{fg}}", from=1-1, to=1-4]
			\arrow["{\bigotimes_p (\phi_{g|_{f^{-1}(i)}})_{i \in \un}}", from=1-4, to=3-4]
			\arrow["{\phi_f}", from=3-1, to=3-4]
		\end{tikzcd}
		\end{equation}
		for each $l$-tuple of objects $(A_k)_{k \in \ul}$ in $\C$.
		
		\item The functor $\otimes_\eta$ is the identity on $\C$, where $\eta \in \O(1)$ is the identity operation in $\O$. Moreover, for all $p \in \O(n)$, the following structure isomorphisms are identities:
		\[
		\begin{tikzcd}
			\id = \phi_{\id_{\un}} : &[-3em] \bigotimes_p (A_i)_{i \in \un} \arrow[r] & \bigotimes_p (\bigotimes_\eta (A_i))_{i \in \un} 
		\end{tikzcd}
		\]
		\[
		\begin{tikzcd}
			\id = \phi_{t_n} : &[-3em] \bigotimes_p (A_i)_{i \in \un} \arrow[r] & \bigotimes_\eta (\bigotimes_p (A_i)_{i \in \un})
		\end{tikzcd}
		\]
		for all $n$-tuples of objects $(A_i)_{i \in \un}$ in $\C$. In the second equality $t_n : \un \to \underline{1}$ is the terminal function.
	\end{itemize}

	Similarly, we say that an $\O$-\emph{monoidal functor} $F : (\C, \phi) \to (\D, \psi)$ is a functor which is equipped with structure natural isomorphisms 
	\[
	\begin{tikzcd}
		\xi_p : &[-3em] \bigotimes_p F(A_i)_{i \in \un} \arrow[r] & F(\bigotimes_p (A_i)_{i \in \un})
	\end{tikzcd}
	\]
	for all $p \in \O(n)$ and $n$-tuple of objects $(A_i)_{i \in \un}$ in $\C$. These isomorphisms have to be coherent, in the sense that for all functions of finite sets $f : \um \to \un$, and choice of an operation $p \in \O(n)$ and sequence of operations $q_i \in \O(|f^{-1}(i)|)$, $i \in \un$, the following diagram commutes 
	\begin{equation}\label{diag:lax_fun_assoc}
	\begin{tikzcd}[every matrix/.append style = {font = \scriptsize},every label/.append style = {font = \tiny},column sep=3em]
		{\displaystyle \bigotimes_{p \circ (q_i)_{i \in \un}} F(A_j)_{j \in \um}} && {\displaystyle F \left(\bigotimes_{p \circ (q_i)_{i \in \un}} (A_j)_{j \in \um}\right)} \\
		&&&& {\displaystyle F\left( \bigotimes_p \left( \bigotimes_{ q_i} (A_j)_{f(j)=i } \right)_{i \in \un} \right)} \\
		{\displaystyle  \bigotimes_p \left( \bigotimes_{ q_i} F(A_j)_{f(j)=i } \right)_{i \in \un} } && {\displaystyle  \bigotimes_p F \left( \bigotimes_{ q_i} (A_j)_{f(j)=i } \right)_{i \in \un} }
		\arrow["{\xi_{p \circ (q_i)_{i \in \un}}}", from=1-1, to=1-3]
		\arrow["{\phi_f}"', from=1-1, to=3-1]
		\arrow["{\otimes_p (F\xi_{q_i})_{i \in \un}}"', from=3-1, to=3-3]
		\arrow["{F(\phi_f)}", from=1-3, to=2-5]
		\arrow["{\xi_p}"', from=3-3, to=2-5]
	\end{tikzcd}
	\end{equation}
	for all $m$-tuples of objects $(A_j)_{j \in \um}$ in $\C$. Moreover $\xi_1$ must be an identity morphism.

	Lastly, an $\O$-\emph{monoidal natural transformation} $\mu : (F, \xi) \Rightarrow (G, \lambda)$ between $\O$-monoidal functors $(F, \xi), (G, \lambda) : (\C, \phi) \to (\D, \psi)$ consists of a natural transformation such that for any $n$-ary operation $p \in \O(n)$ the following diagram commutes
	\begin{equation}\label{diag:montrans_axiom}
	\begin{tikzcd}
		{\displaystyle \bigotimes_p F(A_i)_{i \in \un}} && {\displaystyle F \left(\bigotimes_p (A_i)_{i \in \un}\right)} \\
		\\
		{\displaystyle \bigotimes_p G(A_i)_{i \in \un}} && {\displaystyle G \left(\bigotimes_p (A_i)_{i \in \un}\right)}
		\arrow["{\xi_p}", from=1-1, to=1-3]
		\arrow["{\lambda_p}", from=3-1, to=3-3]
		\arrow["{\bigotimes_p (\mu_{A_i})_{i \in \un}}"', from=1-1, to=3-1]
		\arrow["{\mu_{\bigotimes_p ({A_i})_{i \in \un}}}", from=1-3, to=3-3]
	\end{tikzcd}
	\end{equation}
	for all $n$-tuples of objects $(A_i)_{i \in \un}$ in $\C$.

	To compare with the classical notions of monoidal and symmetric monoidal categories, we consider the operads of interest $\O = \Comm$ and $\O = \Assoc$. In the latter case, the non-symmetric variant of the operad will also play a key role, as the classical definition is formulated without symmetries of any sort. To more easily square our notion of $\O$-monoidal categories with classical definitions, we will invoke the \emph{unbiased} versions of the latter, whose structure is more obviously similar to ours. 
	
	\begin{rmk}
		Our definition of unbiased monoidal category is that of \cite[3.1]{Leinster} (with minor modification), and our notion of unbiased symmetric monoidal category is cribbed from \cite[Ch. 1]{DeligneMilne}. To our knowledge, there is no full exposition in the literature of the theory of unbiased symmetric monoidal categories, and so we fill in the gaps in the definition of \cite{DeligneMilne}. In this sense, the two equivalences of Theorem \ref{thm:classical_mon_cats} make different statements. The equivalence between unbiased monoidal categories and $\Assoc$-pseudomonoids establishes (after connecting with other results) $\Assoc$-pseudomonoids agree with the classical notion of monoidal category. On the other hand, the equivalence between unbiased symmetric monoidal categories and $\Comm$-pseudomonoids is nearly tautological, and merely provides another way of thinking about the latter. The fact that, as Deligne and Milne point out in \cite[Prop. 1.5]{DeligneMilne}, every symmetric monoidal structure extends to an unbiased symmetric monoidal structure shows that we can consider classical monoidal categories as $\Comm$-pseudomonoids. 
	\end{rmk}

	The following definition is a modification of \cite[Prop 1.5]{DeligneMilne}.

	\begin{defn}
		An \emph{unbiased symmetric monoidal category} is a category $\C$ equipped with the following two pieces of data:
		\begin{itemize}
			\item For each $n \geq 0$, an $n$-fold tensor product functor 
			\[
			\begin{tikzcd}
				\displaystyle   \bigotimes_n : &[-3em] \C^\un \arrow[r] & \C
			\end{tikzcd}
			\]
			We write $\bigotimes_n (A_i)_{i \in \un}$ for the tensor product of an $n$-tuple of objects in $\C$.
			
			\item For each function $f : \um \to \un$ in $\Fin$, a natural structure isomorphism
			\[
			\begin{tikzcd}
				\alpha_f : &[-3em] \displaystyle \bigotimes_m (A_j)_{j \in \um} \arrow[r] & \displaystyle \bigotimes_n \left( \bigotimes_{|f^{-1}(i)|} (A_j)_{f(j) = i} \right)_{i \in \un}.
			\end{tikzcd}
			\]
		\end{itemize}
		These data are required to satisfy the following coherence conditions. 
		\begin{itemize}
			\item The functor $\otimes_1$ is the identity on $\C$. 
			\item If one replaces $\otimes_p$ by $\otimes_n$ and $\phi_f$ by $\alpha_f$, the diagram (\ref{diag:moncat_assoc}) commutes.
			\item The $\alpha_f$ satisfy that 
			\[
			\begin{tikzcd}
				\id = \alpha_{t_n} : &[-3em] \bigotimes_n (A_i)_{i \in \un} \arrow[r] & \bigotimes_1 (\bigotimes_n (A_i)_{i \in \un})
			\end{tikzcd}
			\]
			\[
			\begin{tikzcd}
				\id = \alpha_{\id_{\un}} : &[-3em] \bigotimes_n (A_i)_{i \in \un} \arrow[r] & \bigotimes_n (\bigotimes_1 (A_i))_{i \in \un} 
			\end{tikzcd}
			\]
		\end{itemize}
		Similarly, replacing $\otimes_p$ by $\otimes_n$ and $\phi_f$ by $\alpha_f$ in the preceeding discussion yields definitions of \emph{lax symmetric monoidal functors} between unbiased symmetric monoidal categories and \emph{symmetric monoidal transformations} between them. We denote by $\sf{SMC}$ the 2-category of unbiased symmetric monoidal categories with these 1- and 2-morphisms. 
	\end{defn}

	\begin{defn}
		A (non-symmetric) \emph{unbiased monoidal category} is defined almost identically to the symmetric variant, with the key exception that the maps $f:\um\to \un$ of finite sets appearing in the definition must be (weakly) monotone with respect to the canonical order on the naturals. Formally, an \emph{unbiased monoidal category} is a category $\C$ equipped with the following two pieces of data. 
		\begin{itemize}
			\item For each $n \geq 0$, an $n$-fold tensor product functor 
			\[
			\begin{tikzcd}
				\displaystyle   \bigotimes_n : &[-3em] \C^\un \arrow[r] & \C
			\end{tikzcd}
			\]
			We write $\bigotimes_n (A_i)_{i \in \un}$ for the tensor product of an $n$-tuple of objects in $\C$.
			
			\item For each weakly monotone map $f : \um \to \un$, a natural structure isomorphism
			\[
			\begin{tikzcd}
				\alpha_f : &[-3em] \displaystyle \bigotimes_m (A_j)_{j \in \um} \arrow[r] & \displaystyle \bigotimes_n \left( \bigotimes_{|f^{-1}(i)|} (A_j)_{f(j) = i} \right)_{i \in \un}.
			\end{tikzcd}
			\]
		\end{itemize}
		These data are required to satisfy the following coherence conditions: 
		\begin{itemize}
			\item The functor $\otimes_1$ is the identity on $\C$.
			\item For any pair of composable monotone maps $\begin{tikzcd}
				\ul \arrow[r,"g"] &\um \arrow[r,"f"] & \un 
			\end{tikzcd}$, the diagram given by replacing $\otimes_p$ with $\otimes_k$ whenever $p\in\O(k)$ and $\phi_f$ with $\alpha_f$ in diagram \ref{diag:moncat_assoc} commutes. 
			\item The $\alpha_f$ satisfy that 
			\[
			\begin{tikzcd}
				\id = \alpha_{t_n} : &[-3em] \bigotimes_n (A_i)_{i \in \un} \arrow[r] & \bigotimes_1 (\bigotimes_n (A_i)_{i \in \un})
			\end{tikzcd}
			\]
			\[
			\begin{tikzcd}
				\id = \alpha_{\id_{\un}} : &[-3em] \bigotimes_n (A_i)_{i \in \un} \arrow[r] & \bigotimes_n (\bigotimes_1 (A_i))_{i \in \un} 
			\end{tikzcd}
			\]
		\end{itemize}
		A \emph{lax monoidal functor} between unbiased monoidal categories $(\C,\otimes,\alpha)$ and $(\D,\boxtimes,\beta)$ consists of a functor $F:\C\to \D$ and natural structure isomorphisms 
		\[
		\begin{tikzcd}
			\xi_n: &[-3em] \otimes_n F(A_i)_{i\in \un} \arrow[r] & F\left(\otimes_n(A_i)_{i\in\un}\right)
		\end{tikzcd}
		\]
		such that the following conditions hold.
		\begin{enumerate}
			\item For any monotone map $f:\um\to \un$, the diagram obtained from (\ref{diag:lax_fun_assoc}) by the same replacement conventions as above commutes. 
			\item The natural isomorphism $\xi_1$ is an identity. 
		\end{enumerate}
		Finally, a monoidal transformation from $(F,\xi)$ to $(G,\zeta)$ is a natural transformation $\mu:F\Rightarrow G$ such that the obvious modification of the diagram (\ref{diag:montrans_axiom}) commutes. We denote by $\sf{MC}$ the 2-category of unbiased monoidal categories, lax monoidal functors, and monoidal transformations between them. 
	\end{defn}

	\begin{rmk}
		The preceding definition alters Definition 3.1.1 of \cite{Leinster} in two ways. The first, purely notational, is that we use order-preserving maps of finite sets to encode what Leinster calls `double sequences' in op. cit. The second, more important, is that we require $\otimes_1=\id_{\C}$, rather than fixing a natural structure isomorphism $\otimes_1\simeq  \id_{\C}$ as Leinster does. In view of existing strictification results for monoidal categories (e.g., \cite[Thm. 3.1.6]{Leinster}), every unbiased monoidal category in his sense is equivalent to one in our sense, and so the corresponding 2-categories are equivalent. 
	\end{rmk}

	\begin{rmk}
		Let $(\C,\otimes,\alpha)$ be an unbiased symmetric monoidal category. We may recover the more classical presentation of a symmetric monoidal category as follows.
		First, for each $n$-tuple of objects, we can write $\bigotimes_n (A_i)_{i \in \un} = A_1 \otimes \dots \otimes A_n$.
		Associativity is recorded by the natural isomorphisms $\alpha_f$ for appropriate choice of $f$. For instance, for the function $f : \underline{4} \to \underline{2}$ which maps $1,2 \mapsto 1$ and $3, 4 \mapsto 2$, and any quadruple of objects in $\C$ we have an isomorphism
		\[
		\begin{tikzcd}
			(A_1 \otimes A_2) \otimes (A_3 \otimes A_4)  \arrow[r, "\cong"] & A_1 \otimes A_2 \otimes A_3 \otimes A_4.
		\end{tikzcd}
		\]
		
		Taking $n = 0$, since we have $\C^{\underline{0}} \cong *$, the functor $\bigotimes_0$ just picks an object  $I \in \C$ which serves as a unit for the monoidal structure. The unital laws are recorded by the natural isomorphisms $\alpha_f$ for appropriate choice of $f$. For instance, given $f : \underline{2} \to \underline{3}$, $1 \mapsto 1, 2 \mapsto 3$, for any couple of objects in $\C$ we have an isomorphism 
		\[
		\begin{tikzcd}
			A_1 \otimes I \otimes A_2 \arrow[r, "\cong"] & A_1 \otimes A_2
		\end{tikzcd}
		\]
		
		Taking $\sigma : \un \xrightarrow{\cong} \un$ to be an isomorphism and given that $\bigotimes_1 = \id_\C$, we recover the symmetry in the monoidal structure induces by $\sigma$ via the natural isomorphism $\alpha_\sigma$. For instance, in case $\sigma : \underline{2} \to \underline{2}$ is the non-identity isomorphisms, for any couple of objects we do obtain 
		\[
		\begin{tikzcd}
			A_1 \otimes A_2 \arrow[r, "\cong"]&  A_2 \otimes A_1
		\end{tikzcd}
		\]
		The beauty of the unbiased definition is that  associators, unitors and symmetries are recorded by the natural isomorphisms $\alpha_f$ for appropriate choice of $f$.  The coherence laws ensure these interact in the desired way.
	\end{rmk}
	
	\begin{thm}\label{thm:classical_mon_cats}
		There are equivalences of 2-categories 
		\[
		\begin{aligned}
			\sf{MC}&\simeq \scr{A}\!\on{ssoc}(\Cat)\\
			\sf{SMC}&\simeq \scr{C}\!\on{omm}(\Cat)
		\end{aligned}
		\]
	\end{thm}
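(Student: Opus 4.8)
The plan is to treat the two equivalences separately, since the commutative case is essentially a relabelling of data while the associative case requires genuine combinatorial work to eliminate the symmetric-group factors.

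\textbf{The commutative case.} For $\O = \Comm$ we have $\Comm(n) = \ast$, so every occurrence of $\Comm(k)$ in Definition~\ref{defn:pseudomonoid} collapses. A $\Comm$-pseudomonoid in $\Cat$ therefore consists of functors $\gamma_n \colon \C^{\un} \to \C$ --- which I rename $\otimes_n$ --- together with, for each $f \colon \um \to \un$, a $2$-isomorphism $\phi_f$ whose source and target are exactly the two functors appearing in the definition of $\alpha_f$ for an unbiased symmetric monoidal category. Under this identification the four conditions of Definition~\ref{defn:pseudomonoid} become, respectively, the coherences of $\sf{SMC}$: the associativity square of the pseudomonoid is literally~(\ref{diag:moncat_assoc}), Identity~1 says $\otimes_1 = \id$, and Identities~2 and~3 are the normalisations $\alpha_{\id_{\un}} = \id$ and $\alpha_{t_n} = \id$. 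The same collapse identifies lax $\Comm$-morphisms with lax symmetric monoidal functors (the condition of Definition~\ref{defn:O-lax-morph} becomes~(\ref{diag:lax_fun_assoc})) and $\Comm$-$2$-morphisms with symmetric monoidal transformations (becoming~(\ref{diag:montrans_axiom})). Thus the two $2$-categories have identical data and axioms, and the identity assignment is a strict $2$-equivalence $\Comm(\Cat) \simeq \sf{SMC}$.

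\textbf{The associative case: the comparison functors.} Here $\Assoc(n) = \Sigma_n$, and the content of the theorem is that the symmetric-group factors carry no information beyond a relabelling of inputs. I will define a strict $2$-functor $\Phi \colon \Assoc(\Cat) \to \sf{MC}$ by \emph{restriction to identity permutations}: given $(\C, \gamma, \phi)$, set $\otimes_n := \gamma_n(\id_{\un}, -)$ and, for \emph{monotone} $f$, let $\alpha_f$ be the component of $\phi_f$ at the tuple of identity permutations. That this lands in $\sf{MC}$ uses that $\mu_f(\id; \id, \dots, \id) = \id$ for monotone $f$ (by the order-preserving characterisation in Example~\ref{example : comm, assoc}), so that restricting the pseudomonoid associativity to monotone maps and identity permutations yields exactly~(\ref{diag:moncat_assoc}); the unit $\otimes_1 = \id$ comes from Identity~1, and the normalisations from Identities~2 and~3. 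Restriction likewise sends lax $\Assoc$-morphisms to lax monoidal functors and $\Assoc$-$2$-morphisms to monoidal transformations. In the reverse direction I will define $\Psi \colon \sf{MC} \to \Assoc(\Cat)$ by \emph{reindexing}: set $\gamma_n(\sigma, (A_i)_{i}) := \otimes_n (A_{\sigma^{-1}(i)})_{i}$, exactly as forced by Remark~\ref{rmk:assoc_alg_set}, and build $\phi_f$ at $(\sigma; \tau_1, \dots, \tau_n)$ from the structure isomorphism $\alpha_{f_\sigma}$ of the order-preserving part $f_\sigma$ of $\sigma \circ f$, whiskered by the input-relabellings induced by $\sigma_f$ and the $\tau_i$ (which, being pure permutations of inputs, require no nontrivial $2$-cell).

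\textbf{Verifying $\Psi$ and assembling the equivalence.} The main obstacle is checking that $\Psi$ is well-defined --- that the $\phi_f$ just described satisfy the pseudomonoid associativity of Definition~\ref{defn:pseudomonoid}. This is precisely where the combinatorial identities proved in Example~\ref{example : comm, assoc} do the work: the factorisation lemmas there (notably $[\sigma_f \circ \tau]_g = \sigma_{fg} \circ \tau_g$ together with the compatibilities of $T(\{-\})$ with $(-)_g$) show that the two pasting composites in the associativity axiom agree once the permutation relabellings are factored out, reducing the claim to the unbiased monoidal associativity~(\ref{diag:moncat_assoc}) for the $\alpha$'s; the unit axioms are immediate. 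It then remains to check that $\Phi$ and $\Psi$ are mutually inverse. By construction $\Phi \circ \Psi = \id_{\sf{MC}}$ on the nose, since $f_{\id} = f$ and all relabellings are trivial at the identity permutation. For $\Psi \circ \Phi$, the rebuilt operation $\gamma_n(\sigma, -)$ need not equal the original, but the components of the original $\phi_\sigma$ (at outer operation $\id$ and inner units, as in Remark~\ref{rmk:assoc_alg_set}) supply isomorphisms $\gamma_n(\sigma, (A_i)) \cong \otimes_n(A_{\sigma^{-1}(i)})$; the pseudomonoid coherence of these $\phi$'s is exactly the statement that these isomorphisms assemble into a strict $2$-natural isomorphism $\Psi \circ \Phi \cong \id_{\Assoc(\Cat)}$ of $\O$-pseudomonoids. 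This yields the strict $2$-equivalence $\Assoc(\Cat) \simeq \sf{MC}$ and completes the proof.
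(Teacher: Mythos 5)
Your proposal is correct and follows essentially the same route as the paper: the $\Comm$ case is definitional, and the $\Assoc$ case rests on the same two ingredients --- restriction to identity permutations, and the reindexing $\bigotimes_\sigma(A_i)_{i}=\bigotimes_n(A_{\sigma^{-1}(i)})_{i}$ with coherence reduced to monotone maps via the monotone--permutation factorization of morphisms in $\Fin$. The only (cosmetic) difference is that you package the inverse as an explicit $2$-functor $\Psi$ with $\Phi\circ\Psi=\id$ and $\Psi\circ\Phi\cong\id$, whereas the paper shows the restriction functor is surjective on objects and bijective on hom-categories, with the uniqueness of the lifted structure isomorphisms for permutations doing the work that your $2$-natural isomorphism does.
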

	
	\begin{proof}
		The second equivalence is definitional. Indeed, $\Comm$ is the terminal operad, with $\Comm(n) \cong \ast$ for all $n \geq 0$. This way, the data to specify some $\C \in \Comm(\Cat)$ simply restricts to the data for an unbiased symmetric monoidal structure on the category $\C$. Similarly for monoidal functors and transformations.
		
		Recall from Example \ref{example : comm, assoc} that the associative operad has as $n$-ary operations the  permutations of the set $\un$, in which case we write  $\Assoc(n) = \on{Aut}_{\Fin}(\un)$. We have a forgetful 2-functor 
		\[
		\begin{tikzcd}
			U : &[-3em] \Assoc(\Cat) \arrow[r] & \sf{MC}
		\end{tikzcd}
		\]
		defined as follows. Given $(\C,\gamma, \phi) \in \Assoc(\Cat)$, the monoidal category $U(\C,\gamma, \phi)$ has as underlying category $\C$, as $n$-fold tensor product the functor $\bigotimes_n = \gamma_{\id_\un} : \C^\un \to \C$ and structure isomorphisms given by $\phi_f$ for order preserving functions $f : \um \to \un$. Similarly, given a functor $(F, \xi) : (\C,\gamma, \phi) \to (\D,\lambda, \psi)$ in $\Assoc(\Cat)$, we define the monoidal functor $UF : U(\C,\gamma, \phi) \to U(\D,\lambda, \psi)$ to have $F$ as underlying functor with structure isomorphisms given by $\xi_{\id_\un}$. On the level of transformations the data remains the same. It is immediate that this construction is 2-functorial, and so we turn to demonstrating that $U$ is an equivalence of 2-categories. 
		
		First, we see that $U$ is essentially surjective as a 2-functor. Let $(\C , \alpha)$ be a monoidal category. We may extend the data for the monoidal structure to the data for an $\Assoc$-monoidal structure $(\C, \phi)$ (with the same underlying category $\C$) as follows. For each permutation $p : \un \xrightarrow{\cong} \un$ and $n$-tuple of objects $(A_i)_{i \in \un}$ we define\footnote{For clarification of how $p^{-1}(i)$ makes its appearance, see Remark \ref{rmk:assoc_alg_set}.}
		\[
		\displaystyle \bigotimes_p (A_i)_{i \in \un} = \bigotimes_n(A_{p^{-1}(i)})_{i\in \un}.
		\]
		Moreover, given a monotone map $f : \um \to \un$ we define $\phi_f = \alpha_f$. In case $f : \un \to \un$ is an isomorphism, let $\phi_f = \id$. For generic $f$, factor $f=g\circ h$ where $g$ is a monotone map and $h$ is a permutation. Notice that, while there may be several different permutations $h$ which fit in to factorizations of this kind, $g$ is uniquely determined by it. Thus, the requirement that diagram (\ref{diag:moncat_assoc}) commute in this case determines $\phi_f=\phi_{gh}$ in terms of composites of the $\phi_h$ and $\phi_g$ (or the inverses of these). This is independent of the choice of factorization precisely because $\phi_h=\id$ for \emph{any} permutation $h$. In particular, regardless of the choice of factorization, the vertical morphisms in (\ref{diag:moncat_assoc}) collapse to identities. 
		
		 To see that associativity is then satisfied in general, let $\begin{tikzcd}
			\ul \arrow[r,"g"] &\um \arrow[r,"f"] & \un 
		\end{tikzcd}$ be composable morphisms in $\Fin$, $\sigma\in \Assoc(n)$, $\tau_{i}\in \Assoc(f^{-1}(i))$, and $\xi_{j}\in \Assoc(g^{-1}(j))$. By factoring $f=\overline{f}\circ p$ as an order-preserving map and a permutation $p$, and then factoring $p\circ g=\overline{g}\circ q$ as an order-preserving map and a permutation $q$, we reduce the check of associativity to checking associativity for the maps $\overline{f}$ and $\overline{g}$, using the operations $\sigma$, $\{p\circ \tau_i\circ p^{-1}\}$, and $\{q\circ \xi_j \circ q^{-1}\}$. Thus, associativity follows from associativity for order-preserving maps. 
		
		The unitality laws are clearly satisfied. Moreover, by construction, we have $U(\C, \phi) = (\C, \alpha)$, and hence $U$ is surjective on objects.
		
		Next, let us see that for any two objects $(C, \phi)$ and $(\D, \psi)$ in $\Assoc(\Cat)$ the induced functor 
		\[
		\begin{tikzcd}
			U : &[-3em] \Assoc(\Cat)((\C,\gamma, \phi), (\D,\lambda, \psi)) \arrow[r] & \sf{MC}(U(\C,\gamma, \phi), U(\D,\lambda, \psi))
		\end{tikzcd}
		\]
		(which we still denote by $U$)
		is an equivalence of categories. We first demonstrate essential surjectivity on 1-morphisms.

		Let $(\C,\gamma,\phi)$ and $(\D,\lambda, \psi)$ be $\Assoc$-monoidal categories, and let $(F, \zeta) : U(\C,\gamma,\phi) \to (\D,\lambda, \psi)$ be a lax monoidal functor. Then for a permutation $f:\un\to \un$, the coherence condition requires that the diagram
		\[
		\begin{tikzcd}[every matrix/.append style = {font = \scriptsize},every label/.append style = {font = \tiny},column sep=2em]
			{\displaystyle \bigotimes_{f} F(A_j)_{j \in \un}}\arrow[rr,red,"\zeta_f"] \arrow[dd,"\phi_f"']&& {\displaystyle F \left(\bigotimes_{f} (A_j)_{j \in \un}\right)}\arrow[drr,"F(\phi_f)","\cong"'] \\
			&&&& {\displaystyle F\left( \bigotimes_{\id_{\un}} \left( \bigotimes_1 (A_{f^{-1}}) \right)_{i \in \un} \right)} \\
			{\displaystyle  \bigotimes_{\id_{\un}} \left( \bigotimes_{1} F(A_{f^{-1}(i)}) \right)_{i \in \un} }\arrow[rr,"\otimes_{\id_{\un}} (F\zeta_1)_{i\in\un}"] && {\displaystyle  \bigotimes_{\id_{\un}} F \left( \bigotimes_{1} (A_{f^{-1}(i)}) \right)_{i \in \un} }\arrow[urr,"\zeta_n"']
		\end{tikzcd}
		\] 
		commutes, uniquely specifying $\zeta_f$. The coherence diagrams in general then follow from the conditions for $\zeta_{n}$ by factoring morphisms in $\Fin$ as before. It follows that $U$ is bijective on objects of hom-categories. 
		
		Finally, we note that, given an monoidal transformation $\mu:U(F,\zeta)\to U(G,\lambda)$, the morphism $\mu$ commutes with the $\zeta_{\id_{\un}}$ and $\lambda_{\id_{\un}}$. Moreover, by naturality, $\mu$ commutes with the instances of $\phi_f$ in the pentagon above. Thus, $\mu$ commutes with the $\zeta_{\id_{\un}}$ and $\xi_{\id_{\un}}$ if and only if it commutes with $\zeta$ and $\xi$ generally. Thus, $U$ is bijective on 2-morphisms, and the theorem is proven. 
	\end{proof}

 	\section{$\scr{O}$-pseudomonoids in $\DFib$}\label{sec:opsDFib}
 	
 	Having now established the connection between $\scr{O}$-monoidal categories and classical notions of monoidal category, we turn our attention to the study of $\O$-pseudomonoids in $\DFib$. 
 	
 	We will denote an object in $\DFib$ generically by $p:\C_1\to \C_0$, a morphism by its pair of constituent morphisms $f=(f^1,f^0)$ and a 2-morphism by its pair of constituent 2-morphisms $\nu=(\nu^1,\nu^0)$. Note that there the functors 
 	\[
 	\begin{tikzcd}
 		\on{source}: &[-3em] \DFib\arrow[r] & \Cat
 	\end{tikzcd}
 	\]
 	and 
 	\[
 	\begin{tikzcd}
 		\on{target}: &[-3em] \DFib\arrow[r] & \Cat
 	\end{tikzcd}
 	\]
 	which take the total category and base category, respectively, are feline, and, indeed, \emph{strictly} respect the inclusion functors from $\Set$. 
 	
 	\begin{defn}
 		Let $\scr{O}\sf{Fib}$ be the 2-category whose objects are \emph{strict} $\scr{O}$-monoidal functors $p:(\C_1,\gamma^1,\phi^1) \to (\C_0,\gamma^2,\phi^2)$ between $\scr{O}$-monoidal categories whose underlying functors are discrete fibrations. The 1-morphisms of $\scr{O}\sf{Fib}$  are (strictly) commutative squares of lax $\scr{O}$-functors (in the sense that the underlying functors strictly commute with $p$, and the structure 2-morphisms strictly commute with $p$), and the 2-morphisms are pairs of $\scr{O}$-monoidal transformations whose underlying transformations form a 2-morphism of discrete fibrations.
 	\end{defn}
 	
 	\begin{thm}\label{thm:ODFib}
 		There is a strict equivalence of 2-categories 
 		\[
 		\begin{tikzcd}
 			\scr{O}\sf{Mon}(\DFib)\arrow[r] & \scr{O}\sf{Fib}
 		\end{tikzcd}
 		\]
 		which, on objects, sends $(p:\C_1\to\C_0,\gamma, \phi)$ to $(\C_1,\gamma^1,\phi^1)$ and $(\C_0,\gamma^0,\phi^0)$. 
 	\end{thm}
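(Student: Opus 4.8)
The plan is to exploit the fact that the 2-category $\DFib$ is assembled componentwise from $\Cat$ through the feline functors $\on{source}$ and $\on{target}$. Concretely, a 1-morphism in $\DFib$ is a \emph{pair} of functors making a square with the fibrations strictly commute, a 2-morphism is a \emph{pair} of natural transformations subject to a whiskering compatibility, and --- crucially --- finite strict 2-products in $\DFib$ are computed levelwise in $\Cat$. As a consequence, all of the structure data of an $\scr{O}$-pseudomonoid $(p\colon\C_1\to\C_0,\gamma,\phi)$ in $\DFib$ splits into pairs: the 1-morphisms $\gamma_n=(\gamma_n^1,\gamma_n^0)$ and the 2-isomorphisms $\phi_f=(\phi_f^1,\phi_f^0)$.

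First I would produce the forward functor. Applying Theorem \ref{thm:Omon_func_in_feline} to the feline functors $\on{source},\on{target}\colon\DFib\to\Cat$ yields induced 2-functors $\overline{\on{source}},\overline{\on{target}}\colon\scr{O}\sf{Mon}(\DFib)\to\scr{O}(\Cat)$, which on an object return the $\scr{O}$-monoidal categories $(\C_1,\gamma^1,\phi^1)$ and $(\C_0,\gamma^0,\phi^0)$. I would then observe that the defining data of a morphism of discrete fibrations --- the strict commutativity of the square relating $\gamma_n^1$ and $\gamma_n^0$ through $p$, together with the whiskering compatibility of $\phi_f^1$ and $\phi_f^0$ through $p$ --- is \emph{exactly} the content of the cube axiom of Definition \ref{defn:O-lax-morph} specialized to identity structure 2-morphisms. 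Hence $p$, equipped with identity $\xi$'s, is a strict $\scr{O}$-monoidal functor whose underlying functor is a discrete fibration, i.e.\ an object of $\scr{O}\sf{Fib}$; this is the image of $(p,\gamma,\phi)$. On 1- and 2-morphisms the same splitting shows that a lax $\scr{O}$-morphism in $\DFib$ is precisely a compatible pair of lax $\scr{O}$-functors forming a commutative square --- a 1-morphism of $\scr{O}\sf{Fib}$ --- and likewise that an $\scr{O}$-2-morphism in $\DFib$ is precisely a compatible pair of $\scr{O}$-monoidal transformations.

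The inverse functor reassembles this data. Given a strict $\scr{O}$-monoidal discrete fibration $p$ with $\scr{O}$-monoidal structures $(\C_1,\gamma^1,\phi^1)$ and $(\C_0,\gamma^0,\phi^0)$ on source and target, I would form the pairs $\gamma=(\gamma^1,\gamma^0)$ and $\phi=(\phi^1,\phi^0)$ and verify they constitute bona fide 1- and 2-morphisms of $\DFib$: the strict commutativity of $p$ with the $\gamma$'s makes each $\gamma_n$ a morphism in $\DFib$, and the strictness of $p$ as an $\scr{O}$-monoidal functor makes each $\phi_f$ a 2-morphism in $\DFib$. Because products in $\DFib$ are levelwise and its 2-morphisms are compatible pairs, an equality of pasting diagrams in $\DFib$ holds if and only if it holds in each component; therefore the four pseudomonoid axioms for $(p,\gamma,\phi)$ are equivalent to the conjunction of the $\scr{O}$-monoidal category axioms for $\C_1$ and for $\C_0$, which hold by hypothesis. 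These two assignments are manifestly mutually inverse and strictly respect all compositions and identities, giving the desired strict 2-equivalence.

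The main obstacle --- indeed essentially the only substantive verification --- is the bookkeeping showing that the coherence axioms of a $\DFib$-pseudomonoid decompose with no residue into the two copies of the $\scr{O}$-monoidal-category axioms plus the strictness of $p$, and in particular that the whiskering compatibility defining $\phi_f$ as a $\DFib$ 2-morphism coincides on the nose with the specialization of the strict-morphism cube of Definition \ref{defn:O-lax-morph}. Once the levelwise nature of the products and the pairwise description of the higher morphisms in $\DFib$ are pinned down, each axiom transfers mechanically, so the difficulty is one of matching diagrams rather than of genuine mathematical content.
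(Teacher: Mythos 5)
Your proposal is correct and follows essentially the same route as the paper: apply Theorem \ref{thm:Omon_func_in_feline} to the feline functors $\on{source}$ and $\on{target}$ to extract the two $\scr{O}$-monoidal categories, observe that the associativity and unitality axioms split componentwise, and identify the one remaining compatibility condition (the cube) with the statement that $p$ is a strict $\scr{O}$-monoidal functor, with the analogous componentwise analysis on 1- and 2-morphisms. The paper phrases the conclusion as essential surjectivity plus strict fully faithfulness rather than exhibiting an explicit inverse, but this is a cosmetic difference.
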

 
 	\begin{proof}
 		Since the source and target functors are feline, Theorem \ref{thm:Omon_func_in_feline} guarantees that 
 		\[
 		\overline{\on{source}}(p:\C_1\to\C_0,\gamma, \phi)=(\C_1,\gamma^1,\phi^1)
 		\]
 		and 
 		\[
 		\overline{\on{target}}(p:\C_1\to\C_0,\gamma, \phi)=(\C_0,\gamma^0,\phi^0)
 		\]
 		are $\scr{O}$-monoidal categories. Moreover, the associativity and unitality conditions only involve conditions in either the source or the target, but not both, and so only remaining condition is the compatibility of the $\phi^i$ with the map $p$. This takes the form of the commutative cube
 		\[
 		\begin{tikzcd}[column sep=2em,row sep=3em,every matrix/.append style = {font = \scriptsize},every label/.append style = {font = \tiny}]
 			\scr{O}(n)\times {\displaystyle \prod_{i\in \underline{n}}}\left(\scr{O}(f^{-1}(i))\times \C_1^{f^{-1}(i)}\right) \arrow[rr,"\id\times\Pi_i(\id \times p^{f^{-1}(i)})"]\arrow[dd,"(\mu_f\times \id)\circ \tau"']\arrow[dr,"\id\times\Pi_i\gamma^1_{f_(i)}"] & & \scr{O}(n)\times {\displaystyle \prod_{i\in \underline{n}}}\left(\scr{O}(f^{-1}(i))\times \C_0^{f^{-1}(i)}\right)\arrow[dd,"(\mu_f\times \id)\circ \tau"'{pos=0.6}] \arrow[dr,"\id\times \prod_i \gamma^0_{f_i}"] & \\
 			& { \scr{O}(n)\times \C_1^{\underline{n}}}\arrow[rr,"\id\times p^{\underline{n}}"{pos=0.7}]\arrow[dd,"\gamma^1_n"{pos=0.6}]& & \scr{O}(n)\times \C_0^{\underline{n}}\arrow[dd,"\gamma^0_n"]\\
 			\scr{O}(m) \times C^{\underline{m}}\arrow[rr,"\id\times p"{pos=0.6}]\arrow[dr,"\gamma^1_m"']\arrow[ur,Rightarrow,"\phi^1_f"] & & {\scr{O}(m)\times \C_0^{\underline{m}}}\arrow[dr,"\gamma^0_m"] \arrow[ur,Rightarrow,"\phi^0_f"]\\ 
 			& \C_1  \arrow[rr,"p"'] & & \C_0 
 		\end{tikzcd}
 		\] 
 		and so is equivalent to $p$ being a strict $\O$-monoidal functor. Since this is simply a reformulation of conditions on the same data, this shows that the assignment is essentially surjective. 
 		
 		Now, given a lax $\scr{O}$-morphism $((F^1,F^0),(\xi^1,\xi^0))$ from $(p:\C_1\to \C_0, \gamma,\phi)$ to $(q:\D_1\to \D_0, \lambda,\psi)$ in $\O\sf{Mon}(\DFib)$, Theorem \ref{thm:Omon_func_in_feline} applied to the source and target functors shows that $(F^1,\xi^1)$ and $(F^0,\xi^0)$ define lax $\O$-functors between the corresponding $\O$-monoidal categories. Once again, the associativity and unitality conditions for this lax $\O$-morphism only put conditions on the components, and so the only condition which remains for us to unwind is compatibility with $p$. However, this is precisely the condition that both $(F^1,F^0)$ and $(\xi^1_n,\xi^0_n)$ strictly commute with $p$. Thus, such a lax $\O$-morphism $((F^1,F^0),(\xi^1,\xi^0))$ is the same thing as a 1-morphism $((F^1,\xi^1),(F^0,\xi^0))$ in $\O\sf{Fib}$. 
 		
 		Finally, suppose that $(\nu^1,\nu^0)$ is an $\O$-2-morphism between $((F^1,F^0),(\xi^1,\xi^0))$ and $((G^1,G^0),(\zeta^1,\zeta^0))$ in $\O\sf{Mon}(\DFib)$. As before, the fact that the source and target functors are feline implies that $\nu^1$ and $\nu^0$ are $\O$-monoidal transformations between their respective $\O$-monoidal functors. Also as before, the conditions that $(\nu^1,\nu^0)$ must satisfy to be an $\O$-2-morphism only put the selfsame conditions on the components $\nu^1$ and $\nu^2$, and so we need only note that, by definition $(\nu^1,\nu^0)$ is also a 2-morphism of discrete fibrations. 
 		
 		Functoriality and strict fully faithfulness follow immediately from the construction, together with the functoriality of $\overline{\on{source}}$ and $\overline{\on{target}}$, completing the proof. 
 	\end{proof}

	\section{$\scr{O}$-pseudomonoids in $\ISet$}\label{sec:opsISet}
	
	Let us fix an operad $\scr{O}$. The commutative operad $\scr{C}\!\on{omm}$ is terminal, and thus pulling back the Cartesian symmetric monoidal structure on $\Set$ along the unique map $\scr{O}\to \scr{C}\!\on{omm}$ yields a canonical $\scr{O}$-monoidal structure on $\Set$. We will exclusively consider this $\scr{O}$-monoidal structure on $\Set$ in this section.  
	
	Before we got further in identifying $\scr{O}$-pseudomonoids in $\ISet$, let us understand the structure maps of this $\scr{O}$-monoidal structure. We will denote by
	\[
	\begin{tikzcd}
		p_n :&[-3em] \Set^{\underline{n}}\arrow[r] & \Set 
	\end{tikzcd}
	\]
	the limit functors. For simplicity in the following proof, we will take the structure maps of the Cartesian symmetric monoidal structure on $\Set$ for $n\neq 1$ to be the maps 
	\[
	\begin{tikzcd}
		\pi_n:&[-3em] \ast \times \Set^{\underline{n}} \arrow[r,"\ast_{\ast}\times \id"] & \Set^{\underline{n+1}} \arrow[r,"p_n"] & \Set,  
	\end{tikzcd}
	\]
	where, for $X\in \Set$, $\ast_X:X\to \Set$ is the constant functor on a chosen terminal object. The structure transformations $\alpha_f$ are then the unique natural isomorphisms between composites of these functors determined by the universal properties of limits.

	\begin{defn}
		Define $\ISet^{\scr{O},\on{lax}}$ to be the 2-category whose objects are lax $\scr{O}$-monoidal functors to $\Set$, whose morphisms are diagrams 
		\[
		\begin{tikzcd}
			\I\arrow[dr,"F",""'{name=U}]\arrow[dd,"M"'] & \\
			& \Set \\
			\J\arrow[ur,"G"'] & \arrow[from=U,to=3-1,Rightarrow,"\mu",shorten >= 0.5em]
		\end{tikzcd}	
		\]
		of lax $\scr{O}$-monoidal functors and $\scr{O}$-monoidal transformations, and whose 2-morphisms are $\scr{O}$-monoidal transformations. 
	\end{defn}

	\begin{thm}\label{thm:OISet}
		There is a strict equivalence of 2-categories
		\[
		\scr{O}\sf{Mon}(\ISet)\simeq \ISet^{\scr{O},\on{lax}}.
		\]
	\end{thm}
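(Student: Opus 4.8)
The plan is to mirror the proof of Theorem \ref{thm:ODFib}: I will exhibit a 2-functor $\scr{O}\sf{Mon}(\ISet)\to\ISet^{\scr{O},\on{lax}}$ and show it is essentially surjective and strictly fully faithful by splitting the data of an $\scr{O}$-pseudomonoid in $\ISet$ into two pieces --- an $\scr{O}$-monoidal structure on the indexing category, and a lax $\scr{O}$-monoidal functor from it into $(\Set,\times)$. The organizing observation is that $\on{dom}:\ISet\to\Cat$, sending $F:\C\to\Set$ to its domain $\C$, is feline: it preserves the strict 2-products (the product in $\ISet$ has domain $\prod_j\I_j$) and strictly intertwines the inclusions of $\Set$, since the constant functor on the terminal object has discrete domain. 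Hence Theorem \ref{thm:Omon_func_in_feline} yields a 2-functor $\overline{\on{dom}}:\scr{O}\sf{Mon}(\ISet)\to\scr{O}(\Cat)$, which supplies the underlying $\scr{O}$-monoidal category $(\C,\otimes)$ of any $\scr{O}$-pseudomonoid in $\ISet$.

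First I would record, from the description of products in $\ISet$, that $\scr{O}(n)\times F^{\underline{n}}$ is the functor $\scr{O}(n)\times\C^{\underline{n}}\to\Set$ sending $(o,(c_i))$ to $\prod_i F(c_i)$. A structure 1-morphism $\gamma_n:\scr{O}(n)\times F^{\underline{n}}\to F$ therefore consists of a functor $\gamma_n^{\C}:\scr{O}(n)\times\C^{\underline{n}}\to\C$ together with a natural transformation whose component at $(o,(c_i))$ is a map of sets $\xi_o:\prod_i F(c_i)\to F(\gamma_n^{\C}(o,(c_i)))$. The functor parts $\gamma_n^{\C}$ are exactly the $\scr{O}$-monoidal structure $\otimes$ extracted by $\overline{\on{dom}}$, while the families $\xi_o$ are the candidate lax structure maps of $F:(\C,\otimes)\to(\Set,\times)$ for the canonical pulled-back $\scr{O}$-monoidal structure on $\Set$ fixed at the start of the section.

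The heart of the matter is to check that the four conditions of Definition \ref{defn:pseudomonoid}, read off along the natural-transformation component, coincide with the axioms for a lax $\scr{O}$-monoidal functor. The key point is that a 2-isomorphism $\phi_f$ in $\ISet$ amounts to a natural isomorphism $\phi_f^{\C}$ of functors on $\C$ --- the associator recovered by $\overline{\on{dom}}$ --- subject to the single equation $(F\circ\phi_f^{\C})\star\mu_{\mathrm{LB}}=\mu_{\mathrm{TR}}$ relating the natural-transformation parts $\mu_{\mathrm{LB}},\mu_{\mathrm{TR}}$ of the left-bottom and top-right composite 1-morphisms in the defining square of $\phi_f$. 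Unwinding these two composites shows this equation is precisely the coherence hexagon (\ref{diag:lax_fun_assoc}) for $F$; the associativity pasting condition for the $\phi_f$ then involves only the functor parts (which hold in $\scr{O}(\Cat)$ by $\overline{\on{dom}}$) together with this same compatibility. The three identity conditions split similarly: their functor-level content is the unitality of $\otimes$, and their natural-transformation content forces $\xi_1=\id$ and the remaining normalizations of a lax monoidal functor. Conversely, these identifications reassemble an $\scr{O}$-pseudomonoid in $\ISet$ from a pair $((\C,\otimes),F)$, so the object assignment is a bijection.

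Finally I would run the identical bookkeeping on 1- and 2-morphisms. A lax $\scr{O}$-morphism $(M,\mu)$ of $\scr{O}$-pseudomonoids in $\ISet$ has functor part an $\scr{O}$-monoidal functor $M:(\C,\otimes)\to(\D,\boxtimes)$ (again via $\overline{\on{dom}}$) and natural-transformation part an $\scr{O}$-monoidal transformation $\mu:F\Rightarrow G\circ M$; its coherence conditions separate into those defining an $\scr{O}$-monoidal functor on $M$ and those defining an $\scr{O}$-monoidal transformation on $\mu$, which is exactly a 1-morphism of $\ISet^{\scr{O},\on{lax}}$, and the 2-morphisms match verbatim. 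Functoriality and strict full faithfulness are then immediate from the construction and the functoriality of $\overline{\on{dom}}$, giving the strict equivalence. I expect the main obstacle to be the third paragraph: identifying the two natural-transformation composites $\mu_{\mathrm{LB}},\mu_{\mathrm{TR}}$ in the $\phi_f$ square and confirming that their equality is literally diagram (\ref{diag:lax_fun_assoc}), since this is where the associators of $\C$ and the lax structure maps $\xi$ become entangled.
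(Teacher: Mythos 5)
Your proposal is correct and takes essentially the same route as the paper's proof: both decompose each structure morphism $\gamma_n$ into a functor part $G_n:\O(n)\times \C^{\underline{n}}\to\C$ (assembling into the $\O$-monoidal structure on the indexing category) and a natural-transformation part $\nu_n$ (assembling into the lax structure maps of $F$), then check that the pseudomonoid axioms, the lax-morphism axioms, and the 2-morphism axioms separate exactly into the defining conditions of $\ISet^{\O,\on{lax}}$. The only cosmetic difference is that you organize the ``forgetting'' step by explicitly invoking the felineness of $\on{dom}:\ISet\to\Cat$ and Theorem \ref{thm:Omon_func_in_feline}, where the paper simply asserts directly that discarding the $\nu_n$ yields an $\O$-monoidal structure on $\I$ and then verifies the compatibility conditions by hand via the pyramid of pasting diagrams.
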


	\begin{proof} 
			We first construct a 2-functor 
		\[
		\begin{tikzcd}
			\Phi:&[-3em] \scr{O}\sf{Mon}(\ISet) \arrow[r] & \ISet^{\scr{O},\on{lax}}
		\end{tikzcd}
		\]
		as follows. Given a $\scr{O}$-monoid $(F:\I\to \Set, \gamma,\phi)$ in $\ISet$, each morphism $\gamma_n$ takes the form of a functor, $G_n:\scr{O}(n)\times\I^{\underline{n}}\to \I$ and a natural transformation $\nu_n:\scr{O}(n)\times F^{\underline{n}}\Rightarrow F\circ G_n$. The 2-morphisms $\phi_f$ are then simply natural transformations relating the $G_n$, which are in addition compatible with the $\nu_n$. Forgetting the data of the $F^n$'s and $\nu_n$ then immediately yields a $\scr{O}$-monoidal structure on $\I$, with structure maps $G_n$ and structure 2-morphisms $\phi_f$. 
		
		We then note that, by definition of $F^{\underline{n}}$, we can reformulate the fact that $\nu_n$ are transformations from $F^{\underline{n}}\Rightarrow F\circ G_n$ as the fact that there are diagrams  
		\[
		\begin{tikzcd}
			\scr{O}(n)\times \I^{\underline{n}}\arrow[d,"G_n"'] \arrow[r,"\id\times F^n"] & \scr{O}(n)\times \Set^{\underline{n}}\arrow[dl,Rightarrow,"\nu"]\arrow[r] \arrow[dr] & \Set^{\underline{n+1}} \arrow[d,"p_n"]\\
			\I \arrow[rr,"F"'] &  &\Set 
		\end{tikzcd}
		\]
		Thus, $\nu_n$ is equivalently a transformation filling the diagram 
		\[
		\begin{tikzcd}
			\scr{O}(n)\times \I^{\underline{n}}\arrow[r,"G_n"]\arrow[d,"\id\times F^n"'] & \I\arrow[d,"F"] \\
			\scr{O}(n)\times \Set^{\underline{n}}\arrow[r,"\pi_n"] \arrow[ur,Rightarrow,"\nu_n"] & \Set 
		\end{tikzcd}
		\]
		Writing out the compatibility of the $\nu_n$ with the $\phi_f$ yields the associativity cube for a lax $\scr{O}$-morphism. To see this we note that the necessary compatibility is the commutativity of the pyramid 
		\[\begin{tikzpicture}[z={(10:10mm)},x={(-45:10mm)},scale=0.9]
			\def\xstep{8}
			\def\ystep{5}
			\begin{scope}[canvas is zy plane at x=-2]
				\node[transform shape] at (0,\ystep) (OOF){\scriptsize$\O(n)\times \prod_{i\in\underline{n}}\left(\O(f^{-1}(i))\times \I^{f^{-1}(i)}\right)$};
				\node[transform shape] at (\xstep,\ystep)(OnI) {\scriptsize$\O(n)\times \I^{\underline{n}}$};
				\node[transform shape] at (0,0) (OmI) {\scriptsize$\O(m)\times \I^{\underline{m}}$};
				\node[transform shape] at (\xstep,0)(I) {\scriptsize$I$};
				
				\draw[->,transform shape] (OOF) -- node[above]{\tiny$\id\times \Pi_i(G_{f^{-1}(i)})$} (OnI);
				\draw[->,transform shape] (OOF) -- node[left]{\tiny$(\mu_f\times\id)\circ\tau_f$} (OmI);
				\draw[->,transform shape] (OnI) -- node[right] {\tiny$\gamma_n$} (I);
				\draw[->,transform shape] (OmI) -- node[below] {\tiny$\gamma_m$} (I);
				\draw[-{Implies},double distance=1.5pt,transform shape] (OmI) to node[above left] {\scriptsize$\phi_f$} (OnI);
			\end{scope}
			
			\begin{scope}[canvas is zy plane at x=2]
				\node at (4,2.5) (set) {\scriptsize$\Set$};
			\end{scope}
			\draw[->] (OOF) -- (set);
			
			\draw[->] (OmI) -- (set);
			\draw[->] (I) -- (set);
			\draw[->] (OnI) -- (set);
			\path (10.5,7.5) node{};
		\end{tikzpicture}\]
		Examining the construction, we see that we can expand out the faces of this pyramid as 
		\begin{itemize}
			\item \textsc{Left Face:}
			\[
			\begin{tikzcd}[every matrix/.append style = {font = \scriptsize},every label/.append style = {font = \tiny},column sep=10em, row sep=4em]
				\O(n)\times \prod_{i\in\underline{n}}\left(\O(f^{-1}(i))\times \I^{f^{-1}(i)}\right) \arrow[r]\arrow[d,"(\mu_f\times \id)\circ \tau"'] &[-7em] \O(n)\times \prod_{i\in\underline{n}}\left(\O(f^{-1}(i))\times \Set^{f^{-1}(i)}\right)\arrow[r,"\pi_{n}\circ (\id\times \Pi_i\pi_{f^{-1}(i)})"]\arrow[d,"(\mu_f\times \id)\circ \tau"'] & \Set \arrow[d,"\id"] \\
				\O(m)\times \I^{\underline{m}} \arrow[r] & \O(m)\times \Set^{\underline{m}} \arrow[r,"\pi_{m}"'] & \Set 
			\end{tikzcd}
			\] 
			where the left-hand horizontal morphisms are the appropriate products of identities and powers of $F$. The left-hand square commutes strictly, and the right-hand square is filled by a natural isomorphism uniquely determined by the universal properties of products. 
			\item \textsc{Bottom Face:}
			\[
			\begin{tikzcd}[row sep=2em,column sep=2.5em]
				\O(m)\times \I^{\underline{m}}\arrow[r,"\id\times F^m"]\arrow[d,"G_m"'] & \O(m)\times \Set^{\underline{m}} \arrow[r,"\pi_{m}"]\arrow[d,"\pi_{m}"]\arrow[dl,Rightarrow,"\nu_m"] & \Set \arrow[d,"\id"] \\
				\I\arrow[r,"F"']& \Set\arrow[r,"\id=\pi_1"'] & \Set 
			\end{tikzcd}
			\]
			\item \textsc{Right Face:} Identical to the bottom face, with $m$ replaced by $m$. 
			\item \textsc{Top Face:}
			\[
			\begin{tikzcd}[every matrix/.append style = {font = \scriptsize},every label/.append style = {font = \tiny},column sep=10em, row sep=4em]
				\O(n)\times \prod_{i\in\underline{n}}\left(\O(f^{-1}(i))\times \I^{f^{-1}(i)}\right) \arrow[r]\arrow[d,"\id\times \Pi_iG_{f^{-1}(i)}"'] &[-7em] \O(n)\times \prod_{i\in\underline{n}}\left(\O(f^{-1}(i))\times \Set^{f^{-1}(i)}\right)\arrow[r,"\pi_{n}\circ (\id\times \Pi_i\pi_{f^{-1}(i)})"]\arrow[d,"\id\times \Pi_i\pi_{f^{-1}(i)}"'] \arrow[dl,Rightarrow,"\id\times\Pi_i\nu_{f^{-1}(i)}"']  & \Set \arrow[d,"\id"] \\
				\O(n)\times \I^{\underline{n}} \arrow[r] & \O(n)\times \Set^{\underline{n}} \arrow[r,"\pi_{n}"'] & \Set 
			\end{tikzcd}
			\]
		\end{itemize}
		We thus see that this condition is formed by juxtaposing the associativity cube for a lax $\O$-morphism with a cube whose sides are exclusively identities natural isomorphisms determined by the universal property of the product. Moreover, the morphism from the terminal vertex of the associativity cube to the terminal vertex of the whole cube is the identity on $\Set$. Thus, compatibility is equivalent to the associativity condition for a lax $\O$-morphism and so we are left to check unitality. 
		
		The conditions (unitality 2) and (unitality 3) are simply conditions guaranteeing that $(\I,G,\phi)$ is an $\scr{O}$-pseudomonoid. Expanding out (unitality 1) yields the diagram 
		\[
		\begin{tikzcd}
			\I\arrow[d,"\simeq"']\arrow[ddr,bend left,"F"]\arrow[dddd,bend right=6em,"\id"'] & \\
			\ast\times \I\arrow[dd,"\eta\times \id"']\arrow[dr,"\ast\times F"] & \\
			& \Set\arrow[ddl,Rightarrow,"\nu_1"] \\
			\scr{O}(1)\times \I\arrow[d,"G_n"'] \arrow[ur,"\ast_{\scr{O}(1)}\times F"]& \\
			\I \arrow[uur, bend right, "F"']&  
		\end{tikzcd}
		\] 
		and requires that (1) the left-hand bubble commutes and (2) the total pasting diagram equals the identity. The first condition is simply one of the requirements that $(\I,G,\phi)$ be an $\scr{O}$-pseudomonoid. The second condition can be rewritten as 
		\[
		\begin{tikzcd}
			\I\arrow[d,"\simeq"']\arrow[r,"F"] & \Set\arrow[d,"\simeq"]\\		\ast\times \I\arrow[d,"\eta\times \id"']\arrow[r,"\id\times F"] &\ast \times \Set \arrow[d,"\eta\times \id"] \\
			\scr{O}(1)\times \I\arrow[d,"G_n"'] \arrow[r,"\scr{O}(1)\times F"]& \scr{O}(1)\times \Set\arrow[dl,Rightarrow,"\nu_1"]\arrow[d,"\pi_1"]\\
			\I \arrow[r, "F"']&  \Set
		\end{tikzcd}
		\]
		showing that $(F,\nu)$ is precisely an $\scr{O}$-monoidal functor. We thus define $\Phi(F:\I\to\Set,\gamma,\phi)$ to be the $\scr{O}$-monoidal category $(\I,G_n,\phi)$ together with the lax monoidal functor $(F,\nu)$. Note that, as this is simply a reformulation of the data, this automatically defines an essentially surjective assignment on objects. 
		
		Given a lax $\scr{O}$-morphism  $(U,\xi):(F,\gamma,\phi)\to (\widetilde{F},\widetilde{\gamma},\widetilde{\phi})$, we see that $U$ consists of a functor $U:\I\to \widetilde{I}$ and a transformation $\zeta: F\Rightarrow \widetilde{F}\circ U$. Similarly, $\xi$ is simply a collection of transformations relating the $\scr{O}$-monoidal structures on $\I$ and $\widetilde{\I}$ compatible with $\zeta$. We thus see that $(U,\xi)$ defines a lax monoidal functor, and we are left to unwind the compatibility of $\xi$ and $\zeta$.  
		
		This compatibility condition requires the equality of pasting diagrams 
		\[
		\begin{tikzcd}[column sep=3em]
			& \scr{O}(n)\times I^{\underline{n}}\arrow[dl,"G_n"']\arrow[dr,"\ast_{\scr{O}(n)}\times F^{\underline{n}}"] \arrow[dd,"\id\times U^n"]& \\
			I\arrow[dd,"U"'] & & \Set\arrow[dd,"\id"] \arrow[dl, Rightarrow,"\id\times \zeta^n"]\\ 
			& \scr{O}(n)\times \widetilde{I}^{\underline{n}}\arrow[dr,"\ast_{\scr{O}(n)}\times \widetilde{F}^{\underline{n}}",""'{name=U}]\arrow[dl,"\widetilde{G}_n"']\arrow[ul,Rightarrow,"\xi_n"] & \\
			\widetilde{I}\arrow[rr,"\widetilde{F}"'] & & \Set \arrow[from=U,to=4-1,Rightarrow,"\widetilde{\nu}_n"'{pos=0.3},shorten >=2em] 
		\end{tikzcd} \quad = \quad 
			\begin{tikzcd}[column sep=3em]
			& \scr{O}(n)\times \I^{\underline{n}}\arrow[dl,"G_n"']\arrow[dr,"\ast_{\scr{O}(n)}\times F^{\underline{n}}",""'{name=Z}] & \\
			\I\arrow[dd,"U"']\arrow[rr,"F"'] & & \Set\arrow[dd,"\id"]\arrow[ddll,Rightarrow,"\zeta"] \\ 
			&  & \\
			\widetilde{I}\arrow[rr,"\widetilde{F}"'] & & \Set \arrow[from=Z,to=2-1,Rightarrow,shorten >=2em,"\nu_n"'{pos=0.3}]
		\end{tikzcd} 
		\]
		Expanding out morphisms to $\Set$ as composites with $\pi_n:\scr{O}\times \Set^{\underline{n}}\to \Set$, we see that this is equivalently the requirement that the pasting diagrams  
		\[
		\begin{tikzcd}
			\scr{O}(n)\times \I^{\underline{n}}\arrow[r,"G_n"]\arrow[d,"\id\times U^n"']\arrow[dd,bend right=4cm,"\id\times F^n"',""{name=U}] & \I\arrow[d,"U"] \\
			\scr{O}(n)\times \widetilde{\I}^{\underline{n}}\arrow[r,"\widetilde{G}_n"]\arrow[d,"\id\times \widetilde{F}^n"'] \arrow[ur,Rightarrow,"\xi_n"]& \I\arrow[d,"\widetilde{F}"]\\
			\scr{O}(n)\times \Set^{\underline{n}}\arrow[r,"\pi_n"']\arrow[ur,Rightarrow,"\widetilde{\nu}_n"] & \Set \arrow[from=U,to=2-1,Rightarrow,"\zeta^n"]
		\end{tikzcd} \quad = \quad \begin{tikzcd}
			\scr{O}(n)\times \I^{\underline{n}}\arrow[r,"G_n"]\arrow[d,"\id\times F^n"'] & \I\arrow[d,"F"',""{name=F}] \arrow[d,bend left=3cm,"\widetilde{F}\circ U",""'{name=Z}] \\
			\scr{O}(n)\times \Set^{\underline{n}}\arrow[r,"\pi_n"']\arrow[ur,Rightarrow,"\nu_n"] & \Set \arrow[from=F,to=Z,Rightarrow,"\zeta"{pos=0.1}]
		\end{tikzcd}
		\]
		are equal. Thus, this is equivalent to requiring that $\zeta$ be a monoidal natural transformation from $F$ to $\widetilde{F}\circ U$. Since this is simply a reformulation of the data, this establishes that $\Phi$ is a bijection on 1-hom-sets. 
		
		The assignment on 2-morphisms is obvious, and is clearly also bijective on 2-hom-sets, completing the proof.  
	\end{proof}

	\section{The $\O$-monoidal Grothendieck construction}\label{sec:OGroth}

	\begin{prop}
		The 2-functors $\int$ and $\on{T}$ are feline, and the natural transformations $\Phi$ and $\Psi$ of Theorem \ref{thm:classGC} are compatible with their feline structures. 
	\end{prop}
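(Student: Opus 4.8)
The plan is to verify, for each of $\int$ and $\on{T}$, the two requirements in the definition of a feline functor — preservation of finite strict 2-products and commutativity of the triangle over $\Set$ up to a strict natural isomorphism — and then to check that $\Phi$ and $\Psi$ satisfy the compatibility hypothesis of Theorem \ref{thm:Omon_func_in_feline}(2). Both $\ISet$ and $\DFib$ are already known to be catlike, with $\iota_{\ISet}(X)$ the functor from the discrete category $X$ which is constant on a chosen terminal set, and $\iota_{\DFib}(X) = \id_X$.

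First I would produce the feline structure of $\int$. Unwinding the construction, $\int_X(\iota_{\ISet}(X))$ has as objects the pairs $(x,\ast)$ with $x\in X$ and, since $X$ is discrete, only identity morphisms; its projection to $X$ is the identity. Hence the assignment $x\mapsto (x,\ast)$ furnishes a canonical isomorphism $\alpha^{\int}_X\colon \iota_{\DFib}(X)\xrightarrow{\cong}\int(\iota_{\ISet}(X))$ in $\DFib$, and strict naturality in $X$ is immediate, as the map only inserts the contractible factor $\ast$. For product preservation I would compare total categories directly: the objects of $\int(\prod_j F_j)$ are pairs $((i_j)_j,(x_j)_j)$ with $x_j\in F_j(i_j)$, which is exactly the object set of $\prod_j \int(F_j)$, and the projections to $\prod_j\I_j$ agree; the nullary case is the identification of terminal objects already recorded by $\alpha^{\int}$.

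Next I would run the same argument for $\on{T}$. Here $\on{T}(\iota_{\DFib}(X))=\on{T}(\id_X)$ sends $x\in X$ to the fibre $\id_X^{-1}(x)=\{x\}$, a one-element set, which yields a canonical strictly natural isomorphism $\alpha^{\on{T}}_X\colon \iota_{\ISet}(X)\xrightarrow{\cong}\on{T}(\iota_{\DFib}(X))$. Product preservation follows because $\on{T}(\prod_j p_j)$ sends $(c_j)_j$ to $(\prod_j p_j)^{-1}((c_j)_j)=\prod_j p_j^{-1}(c_j)$, and this matches the value of $\prod_j\on{T}(p_j)$ under the $\ISet$-product, whose $\Set$-component is the Cartesian product.

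The last, and most delicate, step is the compatibility of $\Phi$ and $\Psi$ with these feline structures. Following the proof of Corollary \ref{cor:2-equiv=equivOmon}, the composite $\on{T}\circ\int$ carries the feline structure $(\on{T}\circ\alpha^{\int})\star\alpha^{\on{T}}$, while $\on{Id}_{\ISet}$ carries the identity; the condition of Theorem \ref{thm:Omon_func_in_feline}(2) then requires that $(\Phi\circ\iota_{\ISet})\star\alpha^{\on{T}\int}$ be the identity at each $X$. Tracing the definitions, $\alpha^{\on{T}\int}_X$ sends the value $\ast$ of $\iota_{\ISet}(X)$ at $x$ to the element $(x,\ast)$, while the component $\Phi_{\iota_{\ISet}(X)}$ is precisely the ``forget the $\ast$'' identification $(x,\ast)\mapsto \ast$; their composite is therefore the identity, as needed. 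The identical computation with the composite feline structure $(\int\circ\alpha^{\on{T}})\star\alpha^{\int}$ and the component $\Psi_{\iota_{\DFib}(X)}$ settles $\Psi$. I expect this final check — in particular lining up the two composite feline structures and the directions of the several canonical isomorphisms — to be the main obstacle, though it remains a routine diagram chase resting on the uniqueness of the maps determined by the universal properties of the products and of the inclusions of $\Set$, rather than a substantive difficulty.
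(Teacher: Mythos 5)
Your proposal is correct and follows essentially the same route as the paper: the same canonical isomorphisms $\iota_{\DFib}(X)\cong\int\iota_{\ISet}(X)$ and $\iota_{\ISet}(X)\cong\on{T}(\iota_{\DFib}(X))$ furnish the feline structures, and the compatibility of $\Phi$ and $\Psi$ reduces to the same one-line computation on singleton fibres. You are merely more explicit in two places the paper elides --- it deduces product preservation abstractly from the strict 2-equivalence rather than by comparing total categories, and it leaves the $\Phi$, $\Psi$ compatibility check as an exercise, which you carry out correctly.
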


	\begin{proof}
		Since the functors $\int$ and $\on{T}$ form a strict 2-equivalence, preservation of products is immediate. Given a set $X$, we view it as the discrete fibration $\iota_{\DFib}(X)$ given by $\id:X\to X$ and the indexed set $\iota_{\ISet}(X)$ given by the functor $X\to \Set$ which sends each element of $X$ to the terminal set. Since $\on{T}(\iota_{\DFib}(X))$ is also a functor $X\to \Set$ sending every element to a terminal set, there is a unique natural isomorphism $\mu_X$ filling the diagram
		\[
		\begin{tikzcd}
			X\arrow[dr,"\on{T}(\iota_{\DFib}(X))"]\arrow[dd,"\id"'] & \\
			& \Set \\
			X\arrow[ur,"\iota_{\ISet}(X)"'] & 
		\end{tikzcd}
		\]
		It is a simple matter to check that the components $(\id_X,\mu_X)$ define a natural isomorphism $T\circ \iota_{\DFib}\cong \iota_{\ISet}$, showing that $\on{T}$ is feline. 
		
		On the other hand, suppose that we note that $\int\iota_{\ISet}(X)$ is the map $X\times \{\ast\}\to X$ which sends $(x,\ast)\mapsto X$. Since the fibres are all singletons, there is a unique isomorphism 
		\[
		\begin{tikzcd}
			X\times \{\ast\} \arrow[r,"\on{pr}_1"]\arrow[d,"\on{pr}_1"'] & X\arrow[d,"\id"]\\
			X\arrow[r,"\id"] & X
		\end{tikzcd}
		\]
		from $\int\iota_{\ISet}(X)$ to $\iota_{\DFib}(X)$. It is again immediate that the components $(\on{pr}_1^X,\id_X)$ form a natural isomorphism $\int\circ \iota_{\ISet}\cong \iota_{\DFib}$. 
		
		It is an easy exercise to check that $\Phi$ and $\Psi$ are compatible with these feline structures. 
	\end{proof}
	
	\begin{thm}\label{thm:Omon-GC}
		For any operad $\scr{O}$, the classical Grothendieck construction $\int:\ISet\to \DFib$ induces an equivalence of 2-categories 
		\[
		\begin{tikzcd}
			{\displaystyle \int^{\scr{O}}}: &[-3em] \ISet^{\scr{O},\on{lax}}\arrow[r] & \scr{O}\sf{Fib}. 
		\end{tikzcd}
		\]
	\end{thm}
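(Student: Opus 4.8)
The plan is to assemble the theorem from the four structural results already in place, so that it follows by composing three equivalences of 2-categories. First I would invoke Corollary~\ref{cor:2-equiv=equivOmon}: the preceding proposition establishes that $\int$ and $\on{T}$ are feline functors realizing the strict 2-equivalence of Theorem~\ref{thm:classGC}, and that the coherence isomorphisms $\Phi$ and $\Psi$ are compatible with the feline structures. The corollary then produces a strict 2-equivalence
\[
\overline{\int}:\scr{O}\sf{Mon}(\ISet)\xrightarrow{\ \simeq\ }\scr{O}\sf{Mon}(\DFib),
\]
with quasi-inverse $\overline{\on{T}}$, where $\overline{(-)}$ denotes the lift of a feline functor to $\scr{O}$-pseudomonoids supplied by Theorem~\ref{thm:Omon_func_in_feline}.

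Next I would bracket this equivalence on either side by the two ``unwinding'' equivalences. Theorem~\ref{thm:OISet} provides a strict equivalence $\scr{O}\sf{Mon}(\ISet)\simeq\ISet^{\scr{O},\on{lax}}$, and Theorem~\ref{thm:ODFib} provides a strict equivalence $\scr{O}\sf{Mon}(\DFib)\simeq\scr{O}\sf{Fib}$. Composing these with $\overline{\int}$,
\[
\ISet^{\scr{O},\on{lax}}\simeq\scr{O}\sf{Mon}(\ISet)\xrightarrow{\ \overline{\int}\ }\scr{O}\sf{Mon}(\DFib)\simeq\scr{O}\sf{Fib},
\]
yields an equivalence of 2-categories, which I would take as the definition of $\int^{\scr{O}}$. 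As each arrow is a (strict) equivalence, so is the composite.

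The only substantive point is to verify that this composite deserves the name ``induced by $\int$'', i.e.\ that on underlying data it restricts to the classical Grothendieck construction. I would check this by tracing an object through the three stages. An object of $\ISet^{\scr{O},\on{lax}}$ is a lax $\scr{O}$-monoidal functor $F:\I\to\Set$; Theorem~\ref{thm:OISet} repackages it as an $\scr{O}$-pseudomonoid in $\ISet$ whose underlying indexed set is $F$; the functor $\overline{\int}$ sends this to the $\scr{O}$-pseudomonoid in $\DFib$ whose underlying discrete fibration is $\int_{\I}F\to\I$; and Theorem~\ref{thm:ODFib} reads off the strict $\scr{O}$-monoidal fibration $\int_{\I}F\to\I$. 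Since the underlying 1-morphism is exactly $\int(F)$, the composite restricts to $\int$ on underlying discrete fibrations. The identical bookkeeping on 1- and 2-morphisms, using that $\overline{\int}$ is built from $\int$ by Theorem~\ref{thm:Omon_func_in_feline}, confirms the identification.

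I expect no genuine obstacle, since each of the three equivalences has been proven and the feline/compatibility hypotheses of Corollary~\ref{cor:2-equiv=equivOmon} are precisely what the preceding proposition supplies. The point requiring the most care is the last one: ensuring that the abstract composite of named equivalences is literally the functor \emph{induced by} $\int$, rather than merely some equivalence between the same endpoints. This reduces to unwinding the (essentially definitional) identifications of Theorems~\ref{thm:OISet} and~\ref{thm:ODFib} and observing that $\overline{\int}$ agrees with $\int$ on underlying objects and morphisms on the nose, which is immediate from the construction of $\overline{\int}$.
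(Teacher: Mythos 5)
Your proposal is correct and matches the paper's proof: the paper likewise applies Corollary~\ref{cor:2-equiv=equivOmon} (via the feline structure of $\int$ and $\on{T}$) to obtain $\scr{O}\sf{Mon}(\ISet)\simeq\scr{O}\sf{Mon}(\DFib)$, and then identifies the two sides using Theorems~\ref{thm:OISet} and~\ref{thm:ODFib}. Your extra check that the composite really restricts to $\int$ on underlying data is a sensible addition the paper leaves implicit.
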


	\begin{proof}
		Applying Corollary \ref{cor:2-equiv=equivOmon} and Theorem \ref{thm:classGC}, we see that $\int$ induces a 2-equivalence 
		\[
		\begin{tikzcd}
			\scr{O}\sf{Mon}(\ISet)\arrow[r] & \scr{O}\sf{Mon}(\DFib).
		\end{tikzcd}
		\]
		However Theorem \ref{thm:ODFib} identifies the target with $\scr{O}\sf{Fib}$ and Theorem \ref{thm:OISet} identifies the source with $\ISet^{\scr{O},\on{lax}}$, completing the proof. 
	\end{proof}

	\begin{cor}
		For any operad $\scr{O}$ and any $\scr{O}$-monoidal category $\I$, the Grothendieck construction induces an equivalence of 1-categories
		\[
		\Fun^{\scr{O},\on{lax}}(\I,\Set)\simeq \scr{O}\sf{Fib}_{\I}.
		\] 
	\end{cor}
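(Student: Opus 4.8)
The plan is to obtain the corollary by restricting the strict 2-equivalence $\int^{\scr{O}}$ of Theorem \ref{thm:Omon-GC} to the fibers over a fixed base $\I$. The first step is to set up the two relevant projection 2-functors to $\scr{O}(\Cat)$ recording the base $\scr{O}$-monoidal category: on the left, $\ISet^{\scr{O},\on{lax}} \to \scr{O}(\Cat)$ sends a lax $\scr{O}$-monoidal functor $F\colon \C \to \Set$ to $\C$, a 1-morphism $(M,\mu)$ to $M$, and a 2-morphism to its base component; on the right, $\scr{O}\sf{Fib} \to \scr{O}(\Cat)$ sends $p\colon \C_1 \to \C_0$ to $\C_0$ and is defined analogously on higher cells. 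I would then identify the two categories in the statement with the strict fibers of these projections over $\I$. Unwinding definitions, the strict fiber of the left projection over $\I$ has as objects lax $\scr{O}$-monoidal functors $\I \to \Set$ and as morphisms exactly those 1-cells of $\ISet^{\scr{O},\on{lax}}$ lying over $\id_{\I}$, which by the morphism definition are $\scr{O}$-monoidal natural transformations; this is precisely $\Fun^{\scr{O},\on{lax}}(\I,\Set)$, the hom-category $\scr{O}(\Cat)(\I,\Set)$. Likewise the strict fiber on the right is $\scr{O}\sf{Fib}_{\I}$.

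Second, I would observe that $\int^{\scr{O}}$ and its inverse commute strictly with these projections. This is where the explicit form of the Grothendieck construction does the work: $\int$ sends $F\colon \I \to \Set$ to the fibration $\int_{\I} F \to \I$, whose base is again $\I$, and sends a morphism covering $M$ to one covering $M$; by Theorems \ref{thm:ODFib} and \ref{thm:OISet} the $\scr{O}$-monoidal enhancement leaves the base $\scr{O}$-monoidal category untouched. Hence $\int^{\scr{O}}$ carries the fiber over $\I$ into the fiber over $\I$, restricting to a functor $\Fun^{\scr{O},\on{lax}}(\I,\Set) \to \scr{O}\sf{Fib}_{\I}$, and symmetrically its inverse restricts the other way.

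Third, to conclude that the restriction is an equivalence, I would check that the strict 2-natural isomorphisms witnessing the 2-equivalence are \emph{vertical} over $\scr{O}(\Cat)$, meaning their components lie over identity 1-morphisms. This reduces to the transformations $\Phi$ and $\Psi$ of Theorem \ref{thm:classGC}, whose components are the canonical isomorphisms $\on{T}(\int_{\I} F) \cong F$ over $\id_{\I}$ and $\int_{\C_0}\on{T}(p) \cong p$ over $\id_{\C_0}$, together with their $\scr{O}$-monoidal refinements, which keep the base component an identity. A vertical strict 2-natural isomorphism restricts, at the fixed object $\I$, to an ordinary natural isomorphism between the two composites of the restricted functors; these supply the unit and counit of the desired equivalence of 1-categories. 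The one point to handle carefully --- and the main, though modest, obstacle --- is exactly this verticality bookkeeping: confirming that none of the $\scr{O}$-monoidal structure maps produced along the chain of Corollary \ref{cor:2-equiv=equivOmon} and Theorems \ref{thm:ODFib} and \ref{thm:OISet} ever contributes a nonidentity component on the base, so that the restriction to the fiber over $\I$ is well defined and 2-naturality collapses to plain naturality inside the fiber.
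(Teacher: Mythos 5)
Your proposal is correct and is essentially the paper's own argument: the paper proves this corollary in one line by ``restricting each side of Theorem \ref{thm:Omon-GC} to a single $\scr{O}$-monoidal base category $\I$,'' and your write-up simply makes explicit the fiber-over-$\I$ bookkeeping (the projections to $\scr{O}(\Cat)$, the fact that $\int^{\scr{O}}$ preserves the base, and the verticality of $\Phi$ and $\Psi$) that the paper leaves implicit. No gap; you have just supplied the details the authors elected to omit.
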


	\begin{proof}
		 This is simply restricting each side of Theorem \ref{thm:Omon-GC} to a single $\scr{O}$-monoidal base category $\I$. 
	\end{proof}

	\printbibliography

	\appendix
	
	\section{Additional diagrams}
	
	\begin{equation}\label{diag:assoc_for_opfunct}
		\begin{tikzpicture}[z={(10:10mm)},x={(-45:10mm)}]
			\def\xstep{8}
			\def\ystep{5}
			\begin{scope}[canvas is zy plane at x=-2]
				\node[transform shape] at (0,\ystep) (PBTC){\scriptsize$\scr{P}(n)\times \prod_{i\in\underline{n}}\left(\scr{P}(g^{-1}(i))\times \C^{g^{-1}(i)}\right)$};
				\node[transform shape] at (0,2*\ystep) (OBTC){\scriptsize$\scr{O}(n)\times \prod_{i\in\underline{n}}\left(\scr{O}(g^{-1}(i))\times \C^{g^{-1}(i)}\right)$};
				\node[transform shape] at (\xstep,\ystep)(PBTD) {\scriptsize$\scr{P}(n)\times \prod_{i\in\underline{n}}\left(\scr{P}(g^{-1}(i))\times \D^{f^{-1}(i)}\right)$};
				\node[transform shape] at (\xstep,2*\ystep)(OBTD) {\scriptsize$\scr{O}(n)\times \prod_{i\in\underline{n}}\left(\scr{O}(g^{-1}(i))\times \D^{g^{-1}(i)}\right)$};
				
				\node[transform shape] at (0,0) (PCm) {\scriptsize$\P(m)\times \C^{\underline{m}}$};
				\node[transform shape] at (\xstep,0)(PDm) {\scriptsize$\P(m)\times \D^{\underline{m}}$};
				
				\draw[->,transform shape] (PBTC) -- node[above]{\tiny$\id\times \Pi_i(\id\times F^{g^{-1}(i)})$} (PBTD);
				\draw[->,transform shape] (OBTC) -- node[above]{\tiny$\id\times \Pi_i(\id\times F^{g^{-1}(i)})$} (OBTD);
				\draw[->,transform shape] (OBTC) -- (PBTC);
				\draw[->,transform shape] (OBTD) -- (PBTD);
				\draw[->,transform shape] (PBTC) -- (PCm);
				\draw[->,transform shape] (PBTD) -- (PDm);
				\draw[->,transform shape] (PCm) -- (PDm);
			\end{scope}
			
			\begin{scope}[canvas is zy plane at x=3]
				\node[transform shape] at (0,\ystep) (PCn){\scriptsize$\scr{P}(n)\times \C^n$};
				\node[transform shape] at (0,2*\ystep) (OCn){\scriptsize$\scr{O}(n)\times \C^n$};
				\node[transform shape] at (\xstep,\ystep)(PDn) {\scriptsize$\scr{P}(n)\times \D^n$};
				\node[transform shape] at (\xstep,2*\ystep)(ODn) {\scriptsize$\scr{O}(n)\times \D^n$};
				
				\node[transform shape] at (0,0) (C) {\scriptsize$\C$};
				\node[transform shape] at (\xstep,0)(D) {\scriptsize$\D$};
				
				\draw[->,transform shape] (PCn) -- node[above]{\tiny$a$} (PDn);
				\draw[->,transform shape] (OCn) -- node[above]{\tiny$a$} (ODn);
				\draw[->,transform shape] (OCn) -- (PCn);
				\draw[->,transform shape] (PCn) -- (C);
				\draw[->,transform shape] (ODn) -- (PDn);
				\draw[->,transform shape] (PDn) -- (D);
				\draw[->,transform shape] (C) -- (D);
				\draw[-{Implies},double distance=1.5pt,transform shape] (C) to node[below right] {$\xi_n$} (PDn);
			\end{scope}
			
			\begin{scope}[canvas is zy plane at x=-6]
				\node[transform shape] at (0,\ystep) (OCm){\scriptsize$\scr{O}(m)\times \C^m$};
				\node[transform shape] at (\xstep,\ystep)(ODm) {\scriptsize$\scr{O}(m)\times \D^m$};
				
				\draw[->,transform shape] (OCm) -- node[below]{\tiny$a$} (ODm);
			\end{scope}
			
			\begin{scope}[canvas is xy plane at z=0]
				\draw[-{Implies},double distance=1.5pt,transform shape] (PCm) to node[above] {$\phi_g$} (PCn);
				\draw[-{Implies},double distance=1.5pt,transform shape] (PDm) to node[above] {$\psi_g$} (PDn);
			\end{scope}
			\begin{scope}[canvas is xz plane at y=5]
				\draw[-{Implies},double distance=1.5pt,transform shape] (PCn) to node[right] {\tiny$\id\times \Pi_i\xi_{g^{-1}(i)}$} (PBTD);
				\draw[-{Implies},double distance=1.5pt,transform shape] (C) to node[left] {$\xi_m$} (PDm);
				\draw[-{Implies},double distance=1.5pt,transform shape] (OCn) to node[right] {\tiny$\id\times \Pi_i(f^\ast\xi)_{g^{-1}(i)}$} (OBTD);
			\end{scope}
			
			\draw[->] (PBTC) -- (PCn);
			\draw[->] (PBTD) -- (PDn);
			\draw[->] (OBTC) -- (OCn); 
			\draw[->] (OBTD) -- (ODn);
			\draw[->] (PCm) -- (C);
			\draw[->] (PDm) -- (D);
			
			\draw[->] (OBTC) -- (OCm);
			\draw[->] (OBTD) -- (ODm);
			\draw[->] (OCm) -- (PCm);
			\draw[->] (ODm) -- (PDm);
			\path (15,7) node{};
		\end{tikzpicture}
	\end{equation}
	
\end{document}